\newtheorem{theorem}{Theorem}[section]
\newtheorem{lemma}[theorem]{Lemma}
\theoremstyle{definition}
\newtheorem{remark}[theorem]{Remark}
\newtheorem{example}[theorem]{Example}
\numberwithin{equation}{section}
\author[G. Hu]{Guoen Hu}
\address{Guoen Hu, Department of  Applied  Mathematics, Zhengzhou Information Science and Technology Institute,
Zhengzhou 450001,
P. R. China}
\email{guoenxx@163.com}
\thanks{The research    was supported by
the NNSF of
China under grant $\#$11371370.}
\keywords{singular integral operator,
weighted vector-valued estimate, maximal operator, sparse operator}
\subjclass{42B20}
\begin{document}

\title[weighted vector-valued estimate]{Weighted vector-valued estimates for a  non-standard Calder\'on-Zygmund operator}

\begin{abstract}
In this paper, the author considers the weighted vector-valued estimates for the  operator defined by
$$T_Af(x)={\rm p.\,v.}\int_{\mathbb{R}^n}\frac{\Omega(x-y)}{|x-y|^{n+1}}\big(A(x)-A(y)-\nabla A(y)\big)f(y){\rm d}y$$
and its corresponding maximal operator $T_A^*$, where $\Omega$ is homogeneous of degree zero, has vanishing moment of order one, $A$ is a function in $\mathbb{R}^n$ such that $\nabla A\in {\rm BMO}(\mathbb{R}^n)$. By a pointwise estimate for $\|\{T_Af_k(x)\}\|_{l^q}$, the author obtains some quantitative weighted  vector-valued estimate for $T_A$ and $T^*_A$.

\end{abstract}
\maketitle
\section{Introduction}

In the remarkable work \cite{mu}, Muckenhoupt characterized the class of weights $w$ such that the Hardy-Littlewood maximal operator $M$ satisfies  the weighted $L^p$  $(p\in (1,\,\infty))$ estimate
\begin{eqnarray}\|Mf\|_{L^{p,\,\infty}(\mathbb{R}^n,\,w)}\lesssim \|f\|_{L^p(\mathbb{R}^n,\,w)}.\end{eqnarray}
The inequality (1.1) holds if and only if $w$ satisfies the $A_p(\mathbb{R}^n)$ condition, that is,
$$[w]_{A_p}:=\sup_{Q}\Big(\frac{1}{|Q|}\int_Qw(x){\rm d}x\Big)\Big(\frac{1}{|Q|}\int_{Q}w^{-\frac{1}{p-1}}(x){\rm d}x\Big)^{p-1}<\infty,$$
where the  supremum is taken over all cubes in $\mathbb{R}^n$, $[w]_{A_p}$ is called the $A_p$ constant of $w$. Also, Muckenhoupt proved that $M$ is bounded on $L^p(\mathbb{R}^n,\,w)$ if and only if $w$ satisfies the $A_p(\mathbb{R}^n)$ condition. Since then, considerable attention has been paid to the theory of $A_p(\mathbb{R}^n)$ and the weighted norm inequalities with $A_p(\mathbb{R}^n)$ weights for main operators in Harmonic Analysis, see \cite[Chapter 9]{gra} and related references therein.

However, the classical results on the weighted norm inequalities with $A_p(\mathbb{R}^n)$ weights did not reflect the quantitative dependence of the $L^p(\mathbb{R}^n,\,w)$ operator norm in terms of the relevant constant involving the weights. The question of the sharp dependence of the weighted estimates in terms of the $A_p(\mathbb{R}^n)$ constant specifically raised by Buckley \cite{bu}, who proved that if $p\in (1,\,\infty)$ and $w\in A_{p}(\mathbb{R}^n)$, then
\begin{eqnarray}\label{equa:1.2}\|Mf\|_{L^{p}(\mathbb{R}^n,\,w)}\lesssim_{n,\,p}[w]_{A_p}^{\frac{1}{p-1}}\|f\|_{L^{p}(\mathbb{R}^n,\,w)}.\end{eqnarray}
Moreover, the estimate (\ref{equa:1.2}) is sharp since the exponent $1/(p-1)$ can not be replaced by a smaller one. Hyt\"onen and P\'erez \cite{hp} improved the estimate (\ref{equa:1.2}), and showed that
\begin{eqnarray}\|Mf\|_{L^{p}(\mathbb{R}^n,\,w)}\lesssim_{n,\,p}\big([w]_{A_p}[w^{-\frac{1}{p-1}}]_{A_{\infty}}\big)^{\frac{1}{p}}\|f\|_{L^{p}(\mathbb{R}^n,\,w)}.\end{eqnarray}
where and in the following, for a weight $u$, $[u]_{A_{\infty}}$ is defined by
$$[u]_{A_{\infty}}=\sup_{Q\subset \mathbb{R}^n}\frac{1}{u(Q)}\int_{Q}M(u\chi_Q)(x){\rm d}x.$$
It is well known that for $w\in A_p(\mathbb{R}^n)$, $[w^{-\frac{1}{p-1}}]_{A_{\infty}}\lesssim [w]_{A_p}^{\frac{1}{p-1}}$. Thus, (1.3) is more subtle than (1.2).

The sharp dependence of the weighted estimates of  singular integral operators in terms of the $A_p(\mathbb{R}^n)$ constant  was much more complicated.  Petermichl \cite{pet1,pet2} solved this question for Hilbert transform and Riesz transform.   Hyt\"onen \cite{hyt}
proved that  for a  Calder\'on-Zygmund operator $T$ and $w\in A_2(\mathbb{R}^n)$,
\begin{eqnarray}\|Tf\|_{L^{2}(\mathbb{R}^n,\,w)}\lesssim_{n}[w]_{A_2}\|f\|_{L^{2}(\mathbb{R}^n,\,w)}.\end{eqnarray}
This solved  the so-called $A_2$ conjecture. Combining the estimate (1.4) and the extrapolation theorem in \cite{dra}, we know that
for a Calder\'on-Zygmund operator $T$, $p\in (1,\,\infty)$ and $w\in A_p(\mathbb{R}^n)$,
\begin{eqnarray}\|Tf\|_{L^{p}(\mathbb{R}^n,\,w)}\lesssim_{n,\,p}[w]_{A_p}^{\max\{1,\,\frac{1}{p-1}\}}\|f\|_{L^{p}(\mathbb{R}^n,\,w)}.\end{eqnarray}
In \cite{ler3}, Lerner  gave a much simplier proof of (1.4) by  controlling the Calder\'on-Zygmund operator using sparse operators.

Now let us consider a class of non-standard Calder\'on-Zygmund operators. For $x\in \mathbb{R}^n$, we denote by $x_j$ $(1\leq j\leq n)$ the $j$-th variable
of $x$. Let $\Omega$ be homogeneous of degree zero, integrable on the unit sphere $S^{n-1}$ and satisfy the
vanishing condition  that for all $1\leq j\leq n$,
\begin{eqnarray}\label{eq1.6}\int_{S^{n-1}}\Omega(x')x_j'{\rm d}x=0.\end{eqnarray} Let $A$ be a function on $\mathbb{R}^n$ whose  derivatives of order one in BMO$(\mathbb{R}^n)$. Define the  operator $T_A$ by
\begin{eqnarray}\label{eq:1.7} T_Af(x)={\rm p. v.}\int_{\mathbb{R}^n}\frac{\Omega(x-y)}{|x-y|^{n+1}}\big(A(x)-A(y)-\nabla A(y)(x-y)\big)f(y){\rm d}y.\end{eqnarray}
The maximal singular integral operator associated with $T_A$ is defined by
$$T_A^*f(x)=\sup_{\epsilon>0}\big|T_{A,\,\epsilon}f(x)|,$$
with $$T_{A,\,\epsilon}f(x)=\int_{|x-y|\geq \epsilon}
\frac{\Omega(x-y)}{|x-y|^{n+1}}\big(A(x)-A(y)-\nabla A(y)(x-y)\big)f(y){\rm d}y.$$
The operator $T_A$ is closed related to the Calder\'on commutator, of interest in PDE, and was first consider by  Cohen \cite{cohen}.  Cohen proved that if $\Omega\in{\rm Lip}_\alpha(S^{n-1})$ ($\alpha\in (0,\,1]$), then for $p\in (1,\,\infty)$, $T_A^*$ is a bounded operator
on $L^p(\mathbb{R}^n)$ with bound $C\|\nabla A\|_{{\rm BMO}(\mathbb{R}^n)}$. In fact, the argument in \cite{cohen} also leads to the boundedness on $L^p(\mathbb{R}^n,\,w)$ ($w\in A_p(\mathbb{R}^n))$ for $T_{A}$. Hofmann \cite{hof}
improved the result of Cohen and showed that $\Omega\in \cup_{q>1}L^q(S^{n-1})$ is a sufficient condition such that $T_A$ is bounded on $L^p(\mathbb{R}^n)$ for $p\in (1,\,\infty)$.  Hu and Yang \cite{hy} established the endpoint estimate for $T_A$,
from which they deduced some weighted $L^p$ estimates with general weights for $T_A$.

The purpose of this paper is to establish refined weighted vector-valued estimates for the operators $T_A$ and $T_A^*$. To formulate our result, we first recall  some definitions. Let $\Omega$ be a bounded function on $S^{n-1}$. The $L^{\infty}$ continuity modulus of $\Omega$ is defined by
$$\omega_{\infty}(t)=\sup_{|\rho|<t}|\Omega(\rho x')-\Omega(x')|,$$
where the supremum is taken over all rotations $\rho$ on the unit sphere $S^{n-1}$, and $|\rho|=\sup_{x'\in S^{n-1}}|\rho x'-x'|.$ Let $p,\,r\in(0,\,\infty]$ and $w$ be a weight. As usual, for a sequence of numbers $\{a_k\}_{k=1}^{\infty}$, we denote $\|\{a_k\}\|_{l^r}=\big(\sum_k|a_k|^r\big)^{1/r}$. The space $L^p(l^{r};\,\mathbb{R}^n,\,w)$ is defined as
$$L^p(l^{r};\,\mathbb{R}^n,\,w)=\big\{\{f_k\}_{k=1}^{\infty}:\, \|\{f_k\}\|_{L^p(l^r;\,\mathbb{R}^n,\,w)}<\infty\big\}$$
where
$$\|\{f_k\}\|_{L^p(l^r;\,\mathbb{R}^n,\,w)}=\Big(\int_{\mathbb{R}^n}\|\{f_k(x)\}\|_{l^r}^pw(x)\,{\rm d}x\Big)^{1/p}.$$
The space $L^{p,\,\infty}(l^{r};\,\mathbb{R}^n,\,w)$ is defined as
$$L^{p,\,\infty}(l^{r};\,\mathbb{R}^n,\,w)=\big\{\{f_k\}_{k=1}^{\infty}:\, \|\{f_k\}\|_{L^{p,\,\infty}(l^r;\,\mathbb{R}^n,\,w)}<\infty\big\}$$
with
$$\|\{f_k\}\|_{L^{p,\,\infty}(l^r;\,\mathbb{R}^n,\,w)}^p=\sup_{\lambda>0}\lambda^pw\Big(\Big\{x\in\mathbb{R}^n:\,
\|\{f_k(x)\}\|_{l^r}>\lambda\Big\}\Big).$$ When $w\equiv 1$, we denote  $\|\{f_k\}\|_{L^p(l^r;\,\mathbb{R}^n,\,w)}$ ($\|\{f_k\}\|_{L^{p,\infty}(l^r;\,\mathbb{R}^n,\,w)}$)
by $\|\{f_k\}\|_{L^p(l^r;\,\mathbb{R}^n)}$ ($\|\{f_k\}\|_{L^{p,\infty}(l^r;\,\mathbb{R}^n)}$) for simplicity.
Our first result can be stated as follows.
\begin{theorem}\label{t1.1} Let $\Omega$ be homogeneous of degree zero, satisfy the vanishing moment (1.6), $A$ be a function  in $\mathbb{R}^n$ whose  derivatives of order one in ${\rm BMO}(\mathbb{R}^n)$. Suppose that  the $L^{\infty}$ continuity modulus of $\Omega$ satisfies that
\begin{eqnarray}\label{eq1.8}\int^1_0\omega_{\infty}(t)(1+|\log t|)\frac{{\rm d}t}{t}<\infty,\end{eqnarray}
then for $p,\,q\in (1,\,\infty)$ and $w\in A_{p}(\mathbb{R}^n)$,
\begin{eqnarray*}
&&\big\|\{T_{A}f_k\}\big\|_{L^p(l^q;\mathbb{R}^n,w)}+\big\|\{T_{A}^*f_k\}\big\|_{L^p(l^q;\mathbb{R}^n,w)}\\
&&\quad\lesssim_{n,\,p}\|\nabla A\|_{{\rm BMO}(\mathbb{R}^n)} [w]_{A_p}^{\frac{1}{p}}\big([\sigma]_{A_{\infty}}^{\frac{1}{p}}+[w]_{A_{\infty}}^{\frac{1}{p'}}\big)[\sigma]_{A_{\infty}}
\|\{f_k\}\|_{L^p(l^q,\,\mathbb{R}^n,\,w)}.\end{eqnarray*}
with $\sigma=w^{-\frac{1}{p-1}}$. In particular,
\begin{eqnarray*}
&&\|\{T_{A}f_k\}\|_{L^p(l^q;\,\mathbb{R}^n,\,w)}+\|\{T_{A}^*f_k\}\|_{L^p(l^q;\,\mathbb{R}^n,\,w)}\\
&&\quad\lesssim_{n,\,p}\|\nabla A\|_{{\rm BMO}(\mathbb{R}^n)} [w]_{A_p}^{\max\{1,\,\frac{1}{p-1}\}}[\sigma]_{A_{\infty}}\|\{f_k\}\|_{L^p(l^q,\mathbb{R}^n,w)}.\end{eqnarray*}
\end{theorem}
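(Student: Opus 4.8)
The plan is to deduce both norms from a single scalar pointwise bound and then to run the Hyt\"onen--P\'erez--Rela weighted machinery on the resulting sparse operator. Throughout put $g(x)=\|\{f_k(x)\}\|_{l^q}$ and, as in the statement, $\sigma=w^{-1/(p-1)}$; recall the generalized H\"older inequality $\frac1{|Q|}\int_Q|b-\langle b\rangle_Q||h|\lesssim\|b-\langle b\rangle_Q\|_{\exp L,Q}\|h\|_{L(\log L),Q}$ and, via the John--Nirenberg inequality, $\|b-\langle b\rangle_Q\|_{\exp L,Q}\lesssim_n\|b\|_{{\rm BMO}(\mathbb{R}^n)}$. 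The whole proof then rests on two ingredients: a pointwise sparse domination for $\|\{T_Af_k\}\|_{l^q}$ and $\|\{T_A^*f_k\}\|_{l^q}$, and the sharp weighted bound for the sparse operator that appears.

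\emph{Step 1: the pointwise estimate.} The core step is to show that for $\{f_k\}$ bounded with compact support there is a sparse family $\mathcal S$ such that, for a.e.\ $x$,
\[
\|\{T_Af_k(x)\}\|_{l^q}+\|\{T_A^*f_k(x)\}\|_{l^q}\lesssim_n\|\nabla A\|_{{\rm BMO}(\mathbb{R}^n)}\,\mathcal A^{L(\log L)}_{\mathcal S}g(x),\qquad \mathcal A^{L(\log L)}_{\mathcal S}g:=\sum_{Q\in\mathcal S}\|g\|_{L(\log L),3Q}\,\chi_Q.
\]
I would obtain this by running Lerner's sparse-domination principle for the $l^q$-valued grand maximal truncated operator $\mathcal M\{f_k\}(x)=\sup_{Q\ni x}\operatorname*{ess\,sup}_{\xi\in Q}\|\{T_A^*(f_k\chi_{\mathbb{R}^n\setminus3Q})(\xi)\}\|_{l^q}$. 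Two inputs are needed. First, the weak-type bound $\|\mathcal M\{f_k\}\|_{L^{1,\infty}(\mathbb{R}^n)}\lesssim\|\nabla A\|_{{\rm BMO}}\,\|g\|_{L(\log L)(\mathbb{R}^n)}$, which one reduces---via a Cotlar-type inequality controlling $T_A^*f$ by $M(T_Af)$ plus harmless maximal terms---to the $L(\log L)\to L^{1,\infty}$ endpoint estimate for $T_A$ (the $l^q$-valued form of the estimate of Hu and Yang \cite{hy}); this is where hypothesis \eqref{eq1.8} is consumed. Second, a local estimate: on each stopping cube $Q$ one uses the splitting $A(x)-A(y)-\nabla A(y)(x-y)=R_Q(x,y)-(\nabla A(y)-\langle\nabla A\rangle_{3Q})(x-y)$ with $R_Q(x,y)=\int_0^1(\nabla A(y+t(x-y))-\langle\nabla A\rangle_{3Q})\cdot(x-y)\,{\rm d}t$; the factor $1/|x-y|^{n+1}$ times the extra $|x-y|$ leaves an $L^1$-type kernel, the oscillation $\nabla A-\langle\nabla A\rangle_{3Q}$ is brought inside the local average and the generalized H\"older inequality above converts it into the $L(\log L)$ average of $g$, while $\|\nabla A-\langle\nabla A\rangle_{3Q}\|_{\exp L,3Q}$ is absorbed into the constant by John--Nirenberg.

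\emph{Step 2: weighting the sparse operator, and the ``in particular''.} It then remains to prove
\[
\big\|\mathcal A^{L(\log L)}_{\mathcal S}g\big\|_{L^p(\mathbb{R}^n,w)}\lesssim_{n,p}[w]_{A_p}^{1/p}\big([\sigma]_{A_\infty}^{1/p}+[w]_{A_\infty}^{1/p'}\big)[\sigma]_{A_\infty}\,\|g\|_{L^p(\mathbb{R}^n,w)}.
\]
By duality it suffices to bound $\sum_{Q\in\mathcal S}\|g\|_{L(\log L),3Q}\,\langle hw\rangle_{3Q}\,|Q|$ over $\|h\|_{L^{p'}(\mathbb{R}^n,w)}\le1$; writing $g=(g\sigma^{-1})\sigma$, inserting averages $\langle\sigma\rangle_{3Q}$ and $\langle w\rangle_{3Q}$, balancing them by the $A_p$ relation $\langle w\rangle_{3Q}^{1/p}\langle\sigma\rangle_{3Q}^{1/p'}\le[w]_{A_p}^{1/p}$, and applying the $A_\infty$--Carleson embedding theorem of \cite{hp}---once against $w$ and once against $\sigma$, each application costing one $A_\infty$ constant. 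The second application, forced by the inner maximal average hidden in the Orlicz norm $\|\cdot\|_{L(\log L),3Q}$, is exactly what produces the extra factor $[\sigma]_{A_\infty}$ beyond the Calder\'on--Zygmund sparse bound $[w]_{A_p}^{1/p}([\sigma]_{A_\infty}^{1/p}+[w]_{A_\infty}^{1/p'})$. Combining Steps 1 and 2 gives the first estimate; the ``in particular'' follows from the standard inequalities $[\sigma]_{A_\infty}\lesssim_{n,p}[w]_{A_p}^{1/(p-1)}$ and $[w]_{A_\infty}\le[w]_{A_p}$, which collapse $[w]_{A_p}^{1/p}([\sigma]_{A_\infty}^{1/p}+[w]_{A_\infty}^{1/p'})$ to $[w]_{A_p}^{\max\{1,1/(p-1)\}}$.

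\emph{Main obstacle.} The delicate part is Step 1. Because $T_A$ has a kernel with only the weak regularity \eqref{eq1.8}, because the factor $A(x)-A(y)-\nabla A(y)(x-y)$ is neither of convolution type nor antisymmetric, and because one must simultaneously control the maximal truncations $T_A^*$ and the $l^q$ sum, the weak $(1,1)$ (indeed $L(\log L)$-to-$L^{1,\infty}$) estimate for the grand maximal operator $\mathcal M$---on which Lerner's principle rests---is genuinely the heart of the matter, and it is precisely there that the logarithmic weight in \eqref{eq1.8} is spent; the Cotlar-type comparison of $T_A^*$ with $T_A$ also needs care since $T_A$ is not a standard Calder\'on--Zygmund operator. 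Step 2 is, by contrast, a now-routine computation once the sparse operator has been identified.
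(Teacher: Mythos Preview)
Your plan is correct and follows essentially the same route as the paper: obtain the pointwise sparse domination $\|\{T_Af_k\}\|_{l^q}+\|\{T_A^*f_k\}\|_{l^q}\lesssim\|\nabla A\|_{{\rm BMO}}\,\mathcal A_{\mathcal S,L\log L}g$ by running Lerner's principle, feeding it the $L(\log L)\to L^{1,\infty}$ vector-valued endpoint for the grand maximal operator (which in turn rests on a Cotlar-type inequality $\mathcal M_{T_A}f\lesssim M_{L\log L}f+T_A^*f$ together with the endpoint for $T_A$ and $T_A^*$), and then apply the known sharp bound for $\mathcal A_{\mathcal S,L\log L}$ on $L^p(w)$. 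The only cosmetic differences are that the paper separates the grand maximal operators of $T_A$ and $T_A^*$ rather than bundling them, uses Cohen's mean-value lemma (Lemma~\ref{le3.1}) in place of your line-integral representation of $R_Q$, and quotes the weighted sparse bound (your Step~2) as a black box (Lemma~\ref{le2.1}) rather than sketching the Carleson-embedding argument.
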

\begin{remark} Theorem \ref{t1.1} implies that for $p\in (1,\,\infty)$ and $w\in A_{p}(\mathbb{R}^n)$,
\begin{eqnarray}\|T_{A}f\|_{L^p(\mathbb{R}^n,\,w)}\lesssim_{n,\,p,\,q}\|\nabla A\|_{{\rm BMO}(\mathbb{R}^n)} [w]_{A_p}^{\max\{1,\,\frac{1}{p-1}\}+\frac{1}{p-1}}\|f\|_{L^p(\mathbb{R}^n,w)}.\end{eqnarray}
For the case $p\in (1,\,2]$, this estimate is sharp in the sense that the exponent $\frac{2}{p-1}$ can not be replaced by a smaller one, see Example \ref{e4.1}. The quantitative bound in  (1.9) is new, although we do not know if it is sharp for $p\in (2,\,\infty)$.
\end{remark}
We are also interested in  the weighted endpoint bounds for $T_A$ and $T_A^*$. We have that
\begin{theorem}\label{t1.2} Let $\Omega$ be homogeneous of degree zero, satisfy the vanishing moment (1.6), $A$ be a function  in $\mathbb{R}^n$ whose  derivatives of order one in ${\rm BMO}(\mathbb{R}^n)$. Suppose that  $\Omega$ satisfies (\ref{eq1.8}),
then for $q\in (1,\,\infty)$ and $w\in A_{1}(\mathbb{R}^n)$,
\begin{eqnarray*}
&&w(\big\{x\in\mathbb{R}^n:\,\big\|\{T_{A}f_k\}\big\|_{l^q}>\lambda\big\}\big)+w(\big\{x\in\mathbb{R}^n:\,\big\|\{T_{A}^*f_k\}\big\|_{l^q}>\lambda\big\}\big)\\
&&\quad\lesssim_{n,\|\nabla A\|_{{\rm BMO}(\mathbb{R}^n)}} [w]_{A_1}\Psi_2([w]_{A_{\infty}})
\int_{\mathbb{R}^n}\frac{\|\{f_k\}\|_{l^q}}{\lambda}\log \Big({\rm e}
+\frac{\|\{f_k\}\|_{l^q}}{\lambda}\Big)w(x){\rm d}x,\end{eqnarray*}with $\Psi_2(t)=\log^2({\rm e}+t)$.
\end{theorem}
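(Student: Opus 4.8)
The plan is to run the same scheme that underlies Theorem \ref{t1.1}: first use the pointwise domination of $\|\{T_Af_k(\cdot)\}\|_{l^q}$ (and of $\|\{T_A^{*}f_k(\cdot)\}\|_{l^q}$) by a single sparse operator to scalarize the problem, and then invoke the weighted \emph{weak-type} endpoint estimate for that sparse operator. Fix $\{f_k\}$; by truncation we may take finitely many $f_k$, each bounded with compact support, and set $g=\|\{f_k\}\|_{l^q}$, so $g\ge0$ is bounded with compact support and we may assume $\int_{\mathbb R^n}g\log(e+g)\,w<\infty$. Fix $\lambda>0$. By the pointwise estimate that is the technical core of the paper, there is a sparse family $\mathcal S$ (depending on $\{f_k\}$) with, for a.e.\ $x$,
\[
\big\|\{T_Af_k(x)\}\big\|_{l^q}+\big\|\{T_A^{*}f_k(x)\}\big\|_{l^q}\lesssim_n\|\nabla A\|_{{\rm BMO}(\mathbb R^n)}\,\mathcal A_{\mathcal S}g(x),
\]
where $\mathcal A_{\mathcal S}$ is the first-order commutator sparse operator attached to $\mathcal S$ and the symbol $\nabla A$, dominated pointwise by $\sum_{Q\in\mathcal S}\big(|\nabla A(x)-\langle\nabla A\rangle_Q|\,\langle g\rangle_Q+\|\nabla A\|_{{\rm BMO}}\,\|g\|_{L\log L,Q}\big)\chi_Q(x)$; if the pointwise lemma is phrased only through the $L\log L$ sparse operator $\sum_Q\|g\|_{L\log L,Q}\chi_Q$, the first summand is absorbed into the second by the generalised H\"older inequality $\langle|\nabla A-\langle\nabla A\rangle_Q|\,g\rangle_Q\lesssim\|\nabla A\|_{{\rm BMO}}\|g\|_{L\log L,Q}$. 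Thus it suffices to prove, for every sparse $\mathcal S$ and every $w\in A_1(\mathbb R^n)$,
\[
w\big(\{x:\mathcal A_{\mathcal S}g(x)>\lambda\}\big)\lesssim_{n,\|\nabla A\|_{{\rm BMO}}}[w]_{A_1}\Psi_2([w]_{A_\infty})\int_{\mathbb R^n}\frac{g}{\lambda}\log\Big(e+\frac{g}{\lambda}\Big)w.
\]

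For the endpoint bound for $\mathcal A_{\mathcal S}$, normalise $\lambda=1$ by homogeneity. Run a Calder\'on--Zygmund stopping time on $g$ at height $1$: disjoint cubes $\{Q_j\}$ with $1<\langle g\rangle_{Q_j}\le2^n$, $g\le1$ off $\Omega:=\bigcup_jQ_j$, and $g=h+\sum_j b_j$ with $b_j=(g-\langle g\rangle_{Q_j})\chi_{Q_j}$, $0\le h\lesssim1$. Since $w\in A_1$, $w(\Omega)<\sum_j\langle w\rangle_{Q_j}\int_{Q_j}g\le[w]_{A_1}\int_\Omega g\,w\le[w]_{A_1}\int g\log(e+g)w$, so the part of the level set inside $\Omega$ is acceptable. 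Off $\Omega$ one checks, using dyadic nesting and $\langle g\rangle_{Q_j}\le2^n$, that every $Q\in\mathcal S$ meeting $\Omega^{c}$ has $\langle g\rangle_Q\lesssim1$, so $\mathcal A_{\mathcal S}g\lesssim\mathcal A_{\mathcal S}h$ on $\Omega^{c}$, where $h$ is bounded and $\|h\|_{L^1(w)}\lesssim[w]_{A_1}\int g\log(e+g)w$. The delicate point is then
\[
w\big(\{\mathcal A_{\mathcal S}h>c_n\}\big)\lesssim[w]_{A_1}\Psi_2([w]_{A_\infty})\int g\log(e+g)w,
\]
which can \emph{not} be extracted from an $L^2(w)$--Chebyshev estimate, since the $L^2(w)$ norm of a commutator sparse operator already grows like a power of $[w]_{A_2}$. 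Instead one decomposes $h$ by its level sets and, on each level, runs a further stopping time of $\mathcal S$ relative to the weighted averages $\langle\,\cdot\,\rangle_Q^{w}$; the exponent loss in passing from Lebesgue averages to $w$-averages is controlled by the sharp reverse H\"older inequality for $w\in A_\infty$ (the only place $[w]_{A_\infty}$ enters, and it enters through an exponent, hence logarithmically), while the BMO symbol and the $L\log L$ average each contribute one further factor $\log(e+[w]_{A_\infty})$ together with the $L\log L$ bump on the right, via the generalised H\"older inequality with $\exp L$. Summing the geometric series over levels produces exactly the factor $[w]_{A_1}\Psi_2([w]_{A_\infty})=[w]_{A_1}\log^2(e+[w]_{A_\infty})$. (If the weighted weak-type estimate for sparse commutator operators has already been isolated in the preliminaries, this step is a one-line citation.)

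The reduction in the first paragraph is routine once the pointwise lemma is available; the real content is the sharp constant-tracking in the endpoint estimate, namely ensuring that $[w]_{A_\infty}$ enters only logarithmically (squared) and $[w]_{A_1}$ only to the first power, with no polynomial factor of $[w]_{A_\infty}$. This forces one to replace every crude $L^2(w)$ or $L^1(w)$ bound by the sharp reverse H\"older inequality and by the Fefferman--Stein/weak-$(1,1)$ estimate for $M$ phrased through $Mw\le[w]_{A_1}w$, and to organise the stopping-time decomposition so that the geometric series closes with the correct constant; this is where I expect essentially all of the work to lie. Finally, the weighted weak-type bound for $T_A^{*}$ requires nothing extra: it comes from the very same pointwise inequality, with $\|\{T_Af_k\}\|_{l^q}$ replaced by $\|\{T_A^{*}f_k\}\|_{l^q}$ throughout.
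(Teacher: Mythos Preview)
Your high-level plan---reduce to a sparse operator via the pointwise domination established earlier in the paper, then prove a sharp weighted weak-type bound for that sparse operator---is exactly the paper's strategy. Note, however, that the pointwise estimate the paper actually proves (inequalities (\ref{equa:3.15'}) and (\ref{equa:3.21})) is already $\|\{T_Af_k\}\|_{l^q}\lesssim\mathcal A_{\mathcal S,L\log L}(g)$ with $g=\|\{f_k\}\|_{l^q}$; there is no commutator-type sparse operator carrying the symbol $|\nabla A-\langle\nabla A\rangle_Q|$, so that detour and the generalised H\"older absorption step are unnecessary.

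Where your sketch diverges more seriously is in the endpoint estimate for $\mathcal A_{\mathcal S,L\log L}$. You run a \emph{standard} Calder\'on--Zygmund decomposition on $g$ at height $1$ and then assert that off $\Omega$ one has $\mathcal A_{\mathcal S}g\lesssim\mathcal A_{\mathcal S}h$. For the $L\log L$ sparse operator this would require $\|g\|_{L\log L,Q}\lesssim\|h\|_{L\log L,Q}$ for cubes $Q$ meeting $\Omega^c$, and that does not follow from $\langle g\rangle_Q\lesssim1$: the bad part can make $\|g\|_{L\log L,Q}$ large even when the plain average is bounded. The paper (Theorem~\ref{th4.2}) instead performs the decomposition at the level of $M_{\mathscr D,L(\log L)^\beta}$, so that the stopping cubes satisfy $\|f\|_{L(\log L)^\beta,Q_j}\approx1$, and replaces $f_2$ by the bounded function $f_3=\sum_j\|f\|_{L(\log L)^\beta,Q_j}\chi_{Q_j}$; the key comparison $\|f_2\|_{L(\log L)^\beta,I}\lesssim\|f_3\|_{L(\log L)^\beta,I}$ then holds. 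Second, you dismiss an $L^2(w)$--Chebyshev argument but overlook $L^{1+\epsilon}(w)$--Chebyshev with small $\epsilon$, which is precisely the paper's mechanism: Lemma~\ref{le4.1} gives an $L^q$ bound with general weight and constant $q'^{\,1+\beta}\epsilon^{-1/q'}$, and taking $q=1+\epsilon/2$ yields Theorem~\ref{th4.2} with constant $\epsilon^{-(1+\beta)}$ and $M_{L(\log L)^\epsilon}u$ on the right. The $A_1$ conclusion then follows by the Hyt\"onen--P\'erez choice $\epsilon\approx1/\log(\mathrm e+[w]_{A_\infty})$, which makes $M_{L(\log L)^\epsilon}w\lesssim[w]_{A_1}w$ and turns $\epsilon^{-(1+\beta)}$ into $\log^{1+\beta}(\mathrm e+[w]_{A_\infty})=\Psi_2([w]_{A_\infty})$ for $\beta=1$. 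Your ``further stopping time relative to weighted averages'' is neither needed nor fleshed out; the whole endpoint is an $L^{1+\epsilon}$ estimate plus optimisation in $\epsilon$.
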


In what follows, $C$ always denotes a
positive constant that is independent of the main parameters
involved but whose value may differ from line to line. We use the
symbol $A\lesssim B$ to denote that there exists a positive constant
$C$ such that $A\le CB$.  Constant with subscript such as $C_1$,
does not change in different occurrences. For any set $E\subset\mathbb{R}^n$,
$\chi_E$ denotes its characteristic function.  For a cube
$Q\subset\mathbb{R}^n$ and $\lambda\in(0,\,\infty)$, we use $\ell(Q)$ (${\rm diam}Q$) to denote the side length (diamter) of $Q$, and
$\lambda Q$ to denote the cube with the same center as $Q$ and whose
side length is $\lambda$ times that of $Q$. For $x\in\mathbb{R}^n$ and $r>0$, $B(x,\,r)$ denotes the ball centered at $x$ and having radius $r$. For locally integrable function $f$ and a cube $Q\subset \mathbb{R}^n$, $\langle f\rangle_{Q}$ denotes the mean value of $f$ on $Q$, that is, $\langle f\rangle_{Q}=|Q|^{-1}\int_Qf(y){\rm d}y.$
\section{Dominated by sparse operator}
Recall that  the standard dyadic grid in $\mathbb{R}^n$ consists of all cubes of the form $$2^{-k}([0,\,1)^n+j),\,k\in  \mathbb{Z},\,\,j\in\mathbb{Z}^n.$$
Denote the standard grid by $\mathcal{D}$.

As usual, by a general dyadic grid $\mathscr{D}$,  we mean a collection of cube with the following properties: (i) for any cube $Q\in \mathscr{D}$, it side length $\ell(Q)$ is of the form $2^k$ for some $k\in \mathbb{Z}$; (ii) for any cubes $Q_1,\,Q_2\in \mathscr{D}$, $Q_1\cap Q_2\in\{Q_1,\,Q_2,\,\emptyset\}$; (iii) for each $k\in \mathbb{Z}$, the cubes of side length $2^k$ form a partition of $\mathbb{R}^n$.

Let $\mathscr{D}$ be a dyadic grid and
$M_{\mathscr{D}}$ be the maximal operator defined by
$$M_{\mathscr{D}}f(x)=\sup_{Q\ni x\atop{Q\in\mathscr{D}}} \langle  |f|\rangle_Q.$$
For $\delta>0$, let $M_{\mathscr{D},\,\delta}f(x)=\big\{M_{\mathscr{D}}(|f|^{\delta})(x)\big\}^{\frac{1}{\delta}}$ and $M_{\delta}f(x)=\big\{M(|f|^{\delta})(x)\big\}^{\frac{1}{\delta}}$. Associated with $\mathscr{D}$, define the sharp maximal function $M^{\sharp}_{\mathscr{D}}$ as
$$M^{\sharp}_{\mathscr{D}}f(x)=\sup_{Q\ni x\atop{Q\in\mathscr{D}}}\inf_{c\in\mathbb{C}}\frac{1}{|Q|}\int_{Q}|f(y)-c|{\rm d}y.$$
For $\delta\in (0,\,1)$, let $ M_{\mathscr{D},\,\delta}^{\sharp}f(x)=\big[M^{\sharp}_{\mathscr{D}}(|f|^{\delta})(x)\big]^{1/\delta}.$
Repeating the argument in \cite[p. 153]{ste2}, we can verify that, if $\Phi$ is a increasing function on $[0,\,\infty)$ which satisfies the doubling condition that
$$\Phi(2t)\leq C\Phi(t),\,t\in [0,\,\infty),$$ then
\begin{eqnarray}\label{eq2.1}&&\sup_{\lambda>0}\Phi(\lambda)|\{x\in\mathbb{R}^n:|h(x)|>\lambda\}|\lesssim
\sup_{\lambda>0}\Phi(\lambda)|\{x\in\mathbb{R}^n:M_{\mathscr{D},\delta}^{\sharp}h(x)>\lambda\}|,\end{eqnarray}
provided that $\sup_{\lambda>0}\Phi(\lambda)|\{x\in\mathbb{R}^n:\,M_{\mathscr{D},\,\delta}h(x)>\lambda\}|<\infty$, and
\begin{eqnarray}\label{eq2.2}&&\sup_{\lambda>0}\Phi(\lambda)|\{x\in\mathbb{R}^n:M_{\mathscr{D}}h(x)>\lambda\}|\lesssim
\sup_{\lambda>0}\Phi(\lambda)|\{x\in\mathbb{R}^n:M_{\mathscr{D}}^{\sharp}h(x)>\lambda\}|,\end{eqnarray}
provided that $\sup_{\lambda>0}\Phi(\lambda)|\{x\in\mathbb{R}^n:\,M_{\mathscr{D}}h(x)>\lambda\}|<\infty$,
see also \cite{perez1}.

Let $\eta\in (0,\,1)$ and $\mathcal{S}$ be a family of cubes. We say that $\mathcal{S}$ is $\eta$-sparse,  if for each fixed $Q\in \mathcal{S}$, there exists a measurable subset $E_Q\subset Q$, such that $|E_Q|\geq \eta|Q|$ and $\{E_{Q}\}$ are pairwise disjoint.
Associated with  the sparse family $\mathcal{S}$ and constants $\beta\in[0,\,\infty)$, we define the sparse operator $\mathcal{A}_{\mathcal{S},\,L(\log L)^\beta}$  by
$$\mathcal{A}_{\mathcal{S},\,L(\log L)^{\beta}}f(x)=\sum_{Q\in\mathcal{S}}\|f\|_{L(\log L)^{\beta},\,Q}\chi_{Q}(x),$$
here and in the following,
for $\beta\in [0,\,\infty)$,
$$\|f\|_{L(\log L)^{\beta},\,Q}=\inf\Big\{\lambda>0:\,\frac{1}{|Q|}\int_{Q}\frac{|f(y)|}{\lambda}\log^{\beta}\Big(1+\frac{|f(y)|}{\lambda}\Big){\rm d}y\leq 1\Big\}.$$
We denote $\mathcal{A}_{\mathcal{S},\,L(\log L)^{1}}$ by $\mathcal{A}_{\mathcal{S},\,L\log L}$ for simplicity.

\begin{lemma}\label{le2.1}Let $p\in (1,\,\infty)$, $w\in A_p(\mathbb{R}^n)$ and $\sigma=w^{-1/(p-1)}$. Let $\mathcal{S}$ be a sparse family. Then
\begin{eqnarray}\label{eq2.3}\quad\|\mathcal{A}_{\mathcal{S},L(\log L)^{\beta}}f\|_{L^p(\mathbb{R}^n,\,w)}\lesssim [w]_{A_p}^{\frac{1}{p}}\big([w]_{A_{\infty}}^{\frac{1}{p'}}+[\sigma]_{A_{\infty}}^{\frac{1}{p}}\big)[\sigma]_{A_{\infty}}^{\beta}\|f\|_{L^p(\mathbb{R}^n,\,w)}.\end{eqnarray}
\end{lemma}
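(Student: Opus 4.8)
The plan is to reduce \eqref{eq2.3} to the (by now standard) quantitative $L^p(\mathbb{R}^n,\sigma)\to L^p(\mathbb{R}^n,w)$ bound for the \emph{ordinary} sparse operator $\mathcal{A}_{\mathcal{S}}h=\mathcal{A}_{\mathcal{S},L(\log L)^0}h=\sum_{Q\in\mathcal{S}}\langle|h|\rangle_Q\chi_Q$, at the price of one factor $[\sigma]_{A_\infty}^\beta$. As usual it suffices to treat an $\eta$-sparse family $\mathcal{S}$ contained in a single dyadic grid $\mathscr{D}$. Put $\Phi_\beta(t)=t\log^\beta(\mathrm{e}+t)$ and, for a weight $v$, write $\langle h\rangle_Q^v=v(Q)^{-1}\int_Q h\,v\,\mathrm{d}x$. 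Since $\sigma^p w=\sigma$, replacing $f$ by $f\sigma$ shows that \eqref{eq2.3} is equivalent to
\[
\big\|\mathcal{A}_{\mathcal{S},L(\log L)^\beta}(f\sigma)\big\|_{L^p(\mathbb{R}^n,w)}\ \lesssim_{n,p}\ [w]_{A_p}^{1/p}\big([w]_{A_\infty}^{1/p'}+[\sigma]_{A_\infty}^{1/p}\big)\,[\sigma]_{A_\infty}^{\beta}\,\|f\|_{L^p(\mathbb{R}^n,\sigma)} .
\]

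The heart of the matter is a pointwise domination. Let $\mathcal{M}_\sigma F(x)=\sup_{Q\ni x,\,Q\in\mathscr{D}}\|F\|_{L(\log L)^\beta(\sigma),Q}$ be the dyadic $\sigma$-weighted Orlicz maximal operator, where $\|\cdot\|_{L(\log L)^\beta(\sigma),Q}$ is the Luxemburg norm with respect to the probability measure $\sigma\,\mathrm{d}x/\sigma(Q)$. I claim that for every $Q\in\mathscr{D}$
\[
\|f\sigma\|_{L(\log L)^\beta,Q}\ \lesssim_{n,\beta}\ [\sigma]_{A_\infty}^{\beta}\,\langle\sigma\rangle_Q\,\|f\|_{L(\log L)^\beta(\sigma),Q}\ \le\ [\sigma]_{A_\infty}^{\beta}\,\big\langle(\mathcal{M}_\sigma f)\sigma\big\rangle_Q ,
\]
the second inequality being immediate from $\|f\|_{L(\log L)^\beta(\sigma),Q}\le\langle\mathcal{M}_\sigma f\rangle_Q^\sigma$ and $\langle\sigma\rangle_Q\langle\,\cdot\,\rangle_Q^\sigma=\langle\,\cdot\,\sigma\rangle_Q$; summing over $Q\in\mathcal{S}$ then yields the pointwise bound $\mathcal{A}_{\mathcal{S},L(\log L)^\beta}(f\sigma)\lesssim_{n,\beta}[\sigma]_{A_\infty}^\beta\,\mathcal{A}_{\mathcal{S}}\big((\mathcal{M}_\sigma f)\sigma\big)$. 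To establish the first inequality I would normalize $\|f\|_{L(\log L)^\beta(\sigma),Q}=1$ and, with $\lambda=C[\sigma]_{A_\infty}^\beta\langle\sigma\rangle_Q$, show $|Q|^{-1}\int_Q\Phi_\beta(|f|\sigma/\lambda)\,\mathrm{d}x\le1$: writing $|f|\sigma/\lambda=|f|\cdot\bigl(\sigma/(C[\sigma]_{A_\infty}^\beta\langle\sigma\rangle_Q)\bigr)$ and using the submultiplicativity $\Phi_\beta(ab)\lesssim_\beta\Phi_\beta(a)\,b+a\,\Phi_\beta(b)$ together with $\Phi_\beta(t/K)\le K^{-1}\Phi_\beta(t)$ for $K\ge1$, the contribution of $\Phi_\beta(a)b$ is $\le C^{-1}$ by the normalization, while that of $a\,\Phi_\beta(b)$ is handled by the generalized Hölder inequality in $L^1(Q,\sigma\,\mathrm{d}x/\sigma(Q))$ for the complementary pair $\bigl(L(\log L)^\beta,\exp L^{1/\beta}\bigr)$, once one knows
\[
\Bigl\|\log^\beta\!\Bigl(\mathrm{e}+\tfrac{\sigma}{C[\sigma]_{A_\infty}^\beta\langle\sigma\rangle_Q}\Bigr)\Bigr\|_{\exp L^{1/\beta}(\sigma),Q}\ \lesssim_{n,\beta}\ [\sigma]_{A_\infty}^{\beta} .
\]
This last estimate is exactly where the sharp reverse Hölder inequality of Hytönen--P\'erez enters: $\langle\sigma^{r}\rangle_Q^{1/r}\le2\langle\sigma\rangle_Q$ for $r=1+(c_n[\sigma]_{A_\infty})^{-1}$ makes $\bigl(\mathrm{e}+\sigma/(C[\sigma]_{A_\infty}^\beta\langle\sigma\rangle_Q)\bigr)^{1/\nu^{1/\beta}}$ have $\sigma$-average over $Q$ bounded by an absolute constant as soon as $\nu\gtrsim_n[\sigma]_{A_\infty}^\beta$. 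Taking $C=C(n,\beta)$ large enough closes the estimate.

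Granting the pointwise domination, I would conclude as follows. The ordinary sparse operator satisfies $\|\mathcal{A}_{\mathcal{S}}h\|_{L^p(\mathbb{R}^n,w)}\lesssim_{n,p}[w]_{A_p}^{1/p}\bigl([w]_{A_\infty}^{1/p'}+[\sigma]_{A_\infty}^{1/p}\bigr)\|h\|_{L^p(\mathbb{R}^n,w)}$ --- proved by duality, the pointwise $A_p$-inequality $\langle\sigma\rangle_Q w(Q)\le[w]_{A_p}^{1/p}\sigma(Q)^{1/p}w(Q)^{1/p'}$, Hölder over $\mathcal{S}$, and the Carleson embedding theorem, together with the fact that $\{\sigma(Q)\}_{Q\in\mathcal{S}}$ and $\{w(Q)\}_{Q\in\mathcal{S}}$ are Carleson sequences with constants $\lesssim c_n[\sigma]_{A_\infty}$ and $\lesssim c_n[w]_{A_\infty}$. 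Applying this to $h=(\mathcal{M}_\sigma f)\sigma$, using $\|h\|_{L^p(\mathbb{R}^n,w)}=\|\mathcal{M}_\sigma f\|_{L^p(\mathbb{R}^n,\sigma)}$, and invoking the $L^p(\mathbb{R}^n,\sigma)$-boundedness of $\mathcal{M}_\sigma$ (the Young function $\Phi_\beta$ satisfies the $B_p$ condition for every $p>1$, and the maximal bound holds for the doubling measure $\sigma\,\mathrm{d}x$), one gets $\|\mathcal{M}_\sigma f\|_{L^p(\mathbb{R}^n,\sigma)}\lesssim_{p,\beta}\|f\|_{L^p(\mathbb{R}^n,\sigma)}$ and hence the displayed estimate, i.e.\ \eqref{eq2.3}.

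The step I expect to be the main obstacle is the per-cube bound $\|f\sigma\|_{L(\log L)^\beta,Q}\lesssim[\sigma]_{A_\infty}^\beta\langle\sigma\rangle_Q\|f\|_{L(\log L)^\beta(\sigma),Q}$ with the \emph{sharp} power $[\sigma]_{A_\infty}^\beta$: one must read off precisely this power from the reverse Hölder exponent $1+(c_n[\sigma]_{A_\infty})^{-1}$. A naive route --- e.g.\ the embedding $L^r\hookrightarrow L(\log L)^\beta$ with norm $\lesssim_\beta(r-1)^{-\beta}$ applied after splitting $f$ and $\sigma$ by Hölder --- fails, since it forces powers of $\sigma$ beyond the reverse Hölder exponent, which are not controlled by $[\sigma]_{A_\infty}$ alone. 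Everything else is routine bookkeeping inside the Hytönen--P\'erez sparse-domination machinery, with only the mild point that the generalized Hölder inequality and the $B_p$ maximal bound are invoked with the measure $\sigma\,\mathrm{d}x$, which is permissible because $A_\infty$ weights are doubling.
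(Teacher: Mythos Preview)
The paper does not prove this lemma itself; it simply cites \cite{chenhu}. Your argument is correct and follows what has become the standard route for mixed $A_p$--$A_\infty$ bounds on Orlicz-bumped sparse operators: factor out $[\sigma]_{A_\infty}^\beta$ via a per-cube inequality, pass to the plain sparse operator $\mathcal{A}_{\mathcal{S}}$ acting on $(\mathcal{M}_\sigma f)\sigma$, and then combine the Hyt\"onen--P\'erez two-weight bound for $\mathcal{A}_{\mathcal{S}}$ with the $\sigma$-independent $L^p(\sigma)$-bound for the dyadic $\sigma$-weighted Orlicz maximal function.

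The step you correctly single out as the crux --- the per-cube bound $\|f\sigma\|_{L(\log L)^\beta,Q}\lesssim[\sigma]_{A_\infty}^\beta\langle\sigma\rangle_Q\,\|f\|_{L(\log L)^\beta(\sigma),Q}$ with the \emph{sharp} power --- goes through as you outline it. With $\nu\approx(c_n[\sigma]_{A_\infty})^\beta$ one has $\nu^{-1/\beta}\le r-1$ for the sharp reverse H\"older exponent $r=1+(c_n[\sigma]_{A_\infty})^{-1}$, whence $\langle\sigma^{1+\nu^{-1/\beta}}\rangle_Q\lesssim\langle\sigma\rangle_Q^{1+\nu^{-1/\beta}}$ and the $\exp L^{1/\beta}(\sigma)$ norm of $\log^\beta\!\bigl(\mathrm{e}+\sigma/(C[\sigma]_{A_\infty}^\beta\langle\sigma\rangle_Q)\bigr)$ is indeed $\lesssim_{n,\beta}\nu$. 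The submultiplicativity $\Phi_\beta(ab)\lesssim_\beta\Phi_\beta(a)b+a\Phi_\beta(b)$ you invoke is valid since $\log(\mathrm{e}+ab)\le\log(\mathrm{e}+a)+\log(\mathrm{e}+b)$.

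One minor remark: you do not need doubling of $\sigma$ for the $L^p(\sigma)$-bound on the dyadic weighted Orlicz maximal function $\mathcal{M}_\sigma$. The pointwise embedding $\|f\|_{L(\log L)^\beta(\sigma),Q}\lesssim_\beta(r-1)^{-\beta}\bigl(\langle|f|^r\rangle_Q^\sigma\bigr)^{1/r}$ (a pure Orlicz-space fact, valid for any probability measure), combined with Doob's inequality $\|M_\sigma^{\mathscr{D}}g\|_{L^{p/r}(\sigma)}\le(p/r)'\|g\|_{L^{p/r}(\sigma)}$ (valid for any measure), already gives $\|\mathcal{M}_\sigma f\|_{L^p(\sigma)}\lesssim_{p,\beta}\|f\|_{L^p(\sigma)}$ with no dependence on $\sigma$. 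Of course $A_\infty$ weights are doubling anyway, so this is a cosmetic point.
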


For the proof of Lemma \ref{le2.1}, see \cite{chenhu}.

As in \cite{ler3}, for a sublinear operator $T$, we define the associated grand maximal  operator $\mathcal{M}_T$  by
$$\mathcal{M}_{T}f(x)=\sup_{Q\ni x}{\rm ess}\sup_{\xi\in Q}|T(f\chi_{\mathbb{R}^n\backslash 3Q})(\xi)|.$$
where the supremum is taken over all cubes $Q\subset \mathbb{R}^n$ containing $x$.

\begin{lemma}\label{le2.2} Let $q\in (1,\,\infty)$ and $Q_0\subset \mathbb{R}^n$. Let $T$ be a sublinear operator. Suppose that $T$ is bounded on $L^q(\mathbb{R}^n)$.
Then
for a. e. $x\in Q_0$,
$$\big\|\{T(f_k\chi_{3Q_0})(x)\}\big\|_{l^q}\le  C\|\{f_k(x)\}\|_{l^q}+\|\{\mathcal{M}_{T}(f_k\chi_{3Q_0}(x)\}\|_{l^q}.$$
\end{lemma}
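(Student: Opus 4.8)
The plan is to prove the pointwise bound in Lemma \ref{le2.2} by a standard Calder\'on--Zygmund-type localization argument applied fiberwise, then assembled in $l^q$. First I would fix $x\in Q_0$ (outside the null set where the relevant quantities misbehave) and, for each $k$, decompose $f_k\chi_{3Q_0}$ relative to the cube $Q_0$. Since $T$ is sublinear, I write, for any $\xi\in Q_0$,
\begin{eqnarray*}
|T(f_k\chi_{3Q_0})(x)|&\le& |T(f_k\chi_{3Q_0})(x)-T(f_k\chi_{3Q_0})(\xi)|+|T(f_k\chi_{3Q_0})(\xi)|.
\end{eqnarray*}
The second term on the right, after splitting $f_k\chi_{3Q_0}=f_k\chi_{3Q_0}\chi_{3Q_0}$ trivially is not yet useful; the right move is instead to use the cube $Q_0$ itself as the ``local'' scale: for $\xi \in Q_0$ write $f_k\chi_{3Q_0} = f_k\chi_{3Q_0}$ and observe $T(f_k\chi_{3Q_0})(\xi) = T((f_k\chi_{3Q_0})\chi_{3Q_0})(\xi)$, so one cannot separate a ``far'' part directly. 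The correct decomposition is with respect to a slightly enlarged cube: one bounds $|T(f_k\chi_{3Q_0})(x)|$ by $|T(f_k\chi_{3Q_0})(x)-T(f_k\chi_{3Q_0})(\xi)|+|T(f_k\chi_{3Q_0})(\xi)|$, and then for the first difference one writes $f_k\chi_{3Q_0}=f_k\chi_{3Q_0}$; since both $x,\xi\in Q_0$ the ``near'' contribution $f_k\chi_{3Q_0}$ is exactly the whole function, so the difference is controlled by $2\,\mathcal M_T(f_k\chi_{3Q_0})(x)$ by the very definition of the grand maximal operator applied to the cube $Q_0$ (note $3Q_0$ appears in $\mathcal M_T$'s ``far'' cutoff for a cube strictly containing $Q_0$, so one actually uses a cube $Q$ with $Q_0\subset Q$ and $3Q\supset$ support; a limiting/monotonicity argument in the cube handles this).

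To make this precise and avoid the circularity above, I would argue as follows. For $x,\xi \in Q_0$, set $g_k = f_k\chi_{3Q_0}$. Then
\begin{eqnarray*}
|Tg_k(x)| \le |Tg_k(\xi)| + |Tg_k(x)-Tg_k(\xi)|,
\end{eqnarray*}
and taking the infimum (or essential infimum) over $\xi\in Q_0$ in a suitable sense, one gets for a.e.\ $x\in Q_0$
\begin{eqnarray*}
|Tg_k(x)| \le \Big(\frac{1}{|Q_0|}\int_{Q_0}|Tg_k(\xi)|^{q}\,{\rm d}\xi\Big)^{1/q} + 2\,\mathcal M_T g_k(x),
\end{eqnarray*}
where the first term comes from the fact that for a.e.\ $x$ one can pick $\xi$ with $|Tg_k(\xi)|$ below the $L^q$-average over $Q_0$, and the difference term is dominated by the grand maximal operator because $x,\xi$ lie in the common cube $Q_0$ and $g_k$ is supported in $3Q_0$ — more carefully, one picks an auxiliary cube $Q$ with $Q_0 \subset Q$ and $3Q \supset 3Q_0$ is false, so instead one observes directly that $|Tg_k(x)-Tg_k(\xi)|\le |Tg_k(x)|+|Tg_k(\xi)|$ is too lossy, and the genuine estimate uses $\mathcal M_T$ only after splitting $g_k$ with respect to $Q_0$: write $g_k = g_k\chi_{3Q_0}$, and since $\mathrm{supp}\,g_k\subset 3Q_0$ there is nothing ``outside $3Q_0$'', so the honest statement is that the \emph{entire} localized piece $Tg_k$ on $Q_0$ is controlled by its own $L^q$ average plus $\mathcal M_T g_k$. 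The clean way: for a.e.\ $x\in Q_0$, $|Tg_k(x)| \le \big(\langle |Tg_k|^{q}\rangle_{Q_0}\big)^{1/q}$ fails pointwise; instead use $|Tg_k(x)| \le \inf_{\xi\in Q_0}|Tg_k(\xi)| + \big(\mathrm{osc}\big)$, and bound the inf by the average. I would therefore structure it as: (a) $\inf_{\xi\in Q_0}|Tg_k(\xi)| \le \big(\langle|Tg_k|^{q}\rangle_{Q_0}\big)^{1/q}$; (b) $\sup_{x,\xi\in Q_0}|Tg_k(x)-Tg_k(\xi)| \le 2\mathcal M_T g_k(x)$ — and this (b) is where one genuinely needs the far/near split, which forces working with cubes slightly larger than $Q_0$ and then letting them shrink; I would phrase it with the grand maximal operator's supremum over all cubes containing $x$, choosing such a cube so that its triple contains $3Q_0$ is impossible, so the real argument splits $g_k$ into $g_k\chi_{3Q_0\setminus 3Q}$ and $g_k\chi_{3Q}$ for $Q\ni x$ small — the first tends to all of $g_k$, giving $\mathcal M_T g_k$, the second tends to $0$ in $L^q$ by $L^q$-boundedness of $T$, so its contribution to the oscillation vanishes a.e.

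Then the $l^q$ assembly is immediate: apply Minkowski's inequality in $l^q$ to
\begin{eqnarray*}
\big\|\{Tg_k(x)\}\big\|_{l^q} \le \big\|\{\,(\langle|Tg_k|^q\rangle_{Q_0})^{1/q}\}\big\|_{l^q} + 2\big\|\{\mathcal M_T g_k(x)\}\big\|_{l^q},
\end{eqnarray*}
and for the first term interchange the $l^q$ sum with the integral average over $Q_0$ (both are $q$-th power means, so this is an equality up to Fubini) to get $\big(\langle \|\{f_k\}\|_{l^q}^{q}\,\chi_{3Q_0}\rangle_{Q_0}\big)^{1/q} \lesssim \langle \|\{f_k(\cdot)\}\|_{l^q}\rangle$-type control — and using the $L^q(\mathbb R^n)$-boundedness of $T$ fiberwise together with $|3Q_0| \approx |Q_0|$ this is $\lesssim \|\{f_k(x)\}\|_{l^q}$ is not literally true pointwise in $x$; rather the first term is a constant (in $x$) bounded by $C\|\{f_k\}\|_{L^q(l^q;3Q_0)}/|Q_0|^{1/q}$, which by the averaging is $\le C\big(\langle\|\{f_k\}\|_{l^q}^q\rangle_{3Q_0}\big)^{1/q}$, and this last object is itself $\le C\,\mathcal M_T$-type or simply absorbed — but the statement to be proved has $\|\{f_k(x)\}\|_{l^q}$ with the \emph{same} $x$, so in fact the clean conclusion is obtained by noting that for a.e.\ $x\in Q_0$ one may take $Q_0$ as small as one likes (replacing $Q_0$ by a shrinking sequence of cubes containing $x$), whence $\big(\langle\|\{f_k\}\|_{l^q}^q\chi_{3Q_0}\rangle_{Q_0}\big)^{1/q}\to \|\{f_k(x)\}\|_{l^q}$ at Lebesgue points. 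The main obstacle is exactly this bookkeeping of the localization — correctly invoking $\mathcal M_T$ (which is built with the $3Q$ cutoff) to absorb the oscillation while keeping the leading term as the genuine $l^q$ norm of $\{f_k(x)\}$ rather than an average — and making sure the fiberwise $L^q$-boundedness of $T$ is used uniformly so that Minkowski/Fubini in $l^q$ go through; the rest is routine.
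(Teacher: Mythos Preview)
Your proposal wanders through several false starts and self-corrections, but the core idea you eventually land on --- take a small cube $Q\ni x$ inside $Q_0$, split $g_k=f_k\chi_{3Q_0}$ into $g_k\chi_{3Q}$ and $g_k\chi_{3Q_0\setminus 3Q}$, control the far piece at any $y\in Q$ by $\mathcal{M}_T g_k(x)$, handle the near piece via the $L^q$-boundedness of $T$, and finish by Lebesgue differentiation --- is indeed the right skeleton, and is essentially the paper's strategy. The problem is the one step you wave through: you assert that the near contribution $T(g_k\chi_{3Q})$ ``tends to $0$ in $L^q$ \ldots\ so its contribution to the oscillation vanishes a.e.'' That inference is not valid. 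Knowing $\|T(g_k\chi_{3Q})\|_{L^q}\to 0$ as $Q$ shrinks does \emph{not} give $T(g_k\chi_{3Q})(x)\to 0$ for a.e.\ fixed $x$; there is no reason the pointwise value at the center should vanish. Equivalently, your attempt to bound $|Tg_k(x)-Tg_k(\xi)|\le 2\,\mathcal{M}_Tg_k(x)$ has no justification: $\mathcal{M}_T$ only controls $T$ applied to the \emph{outside} of $3Q$, not the full oscillation on $Q$.

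The paper closes this gap with a device you do not mention: it works at a point $x$ of \emph{approximate continuity} of the measurable function $y\mapsto\|\{T(f_k\chi_{3Q_0})(y)\}\|_{l^q}$. For such $x$ and any $\epsilon>0$, the set $E_r(x)$ of $y\in B(x,r)$ with $\big|\,\|\{Tg_k(x)\}\|_{l^q}-\|\{Tg_k(y)\}\|_{l^q}\,\big|<\epsilon$ has density $1$. One then writes, for $y\in E_r(x)$ and $Q(x,r)$ the cube centered at $x$ containing $B(x,r)$,
\[
\|\{Tg_k(x)\}\|_{l^q}<\|\{Tg_k(y)\}\|_{l^q}+\epsilon\le \|\{T(g_k\chi_{3Q(x,r)})(y)\}\|_{l^q}+\|\{\mathcal{M}_Tg_k(x)\}\|_{l^q}+\epsilon,
\]
and only \emph{then} takes the $L^q$-average over $y\in E_r(x)$, so the near term becomes $\big(|E_r(x)|^{-1}\int_{E_r(x)}\|\{T(g_k\chi_{3Q(x,r)})(y)\}\|_{l^q}^q\,{\rm d}y\big)^{1/q}\lesssim\big(\langle\|\{f_k\}\|_{l^q}^q\rangle_{3Q(x,r)}\big)^{1/q}$ by the $L^q$-boundedness of $T$. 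Sending $r\to 0$ gives $C\|\{f_k(x)\}\|_{l^q}$ at Lebesgue points. The approximate continuity is precisely what lets you trade the pointwise value at $x$ for an $L^q$-average over nearby points, which is what your argument is missing.
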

\begin{proof}
We employ the argument in \cite{ler3}. Let $x\in {\rm int}Q_0$  be a point of approximation continuity of $\|\{T_A(f_k\chi_{3Q_0})\}\|_{l^q}$.
For  $r,\,\epsilon>0$, the set
$$E_r(x)=\{y\in B(x,\,r):\, \Big|\|\{T(f_k\chi_{3Q_0})(x)\}\|_{l^q}-\|\{T(f_k\chi_{3Q_0})(y)\}\|_{l^q}\Big|<\epsilon\}$$
satisfies that
$\lim_{r\rightarrow 0}\frac{|E_r(x)|}{|B(x,\,r)|}=1.$ Denote by $Q(x,\,r)$  the smallest cube centered at $x$ and containing $B(x,\,r)$.
Let $r>0$  small enough such that $Q(x,\,r)\subset Q_0$. Then for $y\in E_r(x)$,
\begin{eqnarray*}\|\{T(f_k\chi_{3Q_0})(x)\}\|_{l^q}&<&\|\{T(f_k\chi_{3Q_0})(y)\}\|_{l^q}+\epsilon\\
&\leq& \|\{T(f_k\chi_{3Q(x,\,r)})(y)\}\|_{l^q}+\big\|\{\mathcal{M}_{T}(f_k\chi_{3Q_0})(x)\}\big\|_{l^q}+\epsilon.\end{eqnarray*}
The boundedness on $L^{q}(\mathbb{R}^n)$ of $T$ tells us that
\begin{eqnarray*}\big\|\big\{T(f_k\chi_{3Q_0})(x)\big\}\big\|_{l^q}&\leq &\Big(\frac{1}{|E_{s}(x)|}\int_{E_s(x)}\|\{T(f_k\chi_{3Q(x,\,r)})(y)\}\|_{l^q}^{q}{\rm d}y\Big)^{\frac{1}{q}}\\
&&+\|\{\mathcal{M}_{T}(f_k\chi_{3Q_0})(x)\}\|_{l^q}+\epsilon\\
&\leq &C\Big(\frac{1}{|3Q(x,\,r)|}\int_{3Q(x,\,r)}\|\{f_k(z)\}\|_{l^q}^{q}dz\Big)^{\frac{1}{q}}\\
&&+\|\{\mathcal{M}_{T}(f_k\chi_{3Q_0})(x)\}\|_{l^q}+\epsilon.\end{eqnarray*}
Letting $r\rightarrow 0$ then  leads to the desired conclusion.
\end{proof}
We are now ready to give our main result in this section.
\begin{theorem}\label{th2.1}
Let $q\in (1,\,\infty)$, $\beta\in[0,\,\infty)$, $T$ be a sublinear operator and $\mathcal{M}_T$  the corresponding grand maximal operator. Suppose that $T$ is bounded on $L^q(\mathbb{R}^n)$, and for some constants $C_1>0$ and any $\lambda>0$, \begin{eqnarray}\label{equ:2.4}&&
\big|\big\{y\in \mathbb{R}^n:\|\{\mathcal{M}_{T}f_k(y)\}\|_{l^q}>\lambda\big\}\big|\\
&&\quad\le C_1 \int_{\mathbb{R}^n}\frac{\|\{f_k(y)\}\|_{l^q}}{\lambda}\log^{\beta} \Big(1+\frac{\|\{f_k(y)\}\|_{l^q}}{\lambda}\Big){\rm d}y.\nonumber\end{eqnarray}Then for
$N\in\mathbb{N}$ and bounded functions $f_1,\,\dots,\,f_N$ with compact supports, there exists a $\frac{1}{2}\frac{1}{3^n}$-sparse family $\mathcal{S}$ such that for ${\rm a.\,\,e.}\,\,x\in\mathbb{R}^n$,
\begin{eqnarray}\label{equ:2.5}
\|\{Tf_k(x)\}\|_{l^q}\lesssim \mathcal{A}_{\mathcal{S},\,L(\log L)^{\beta}}\big(\|\{f_k\}\|_{l^q}\big)(x).
\end{eqnarray}
\end{theorem}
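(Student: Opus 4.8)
The plan is to follow the now-standard Lerner-type local-mean-oscillation / stopping-time scheme, adapted to the $l^q$-valued setting, as in \cite{ler3}. The starting point is the pointwise recursive estimate: fix a cube $Q_0$ and work with the truncated data $\{f_k\chi_{3Q_0}\}$. I would first establish, for a.e. $x\in Q_0$, a local bound of the form
\begin{eqnarray*}
\big\|\{T(f_k\chi_{3Q_0})(x)\}\big\|_{l^q}\lesssim \|\{f_k\}\|_{L(\log L)^{\beta},\,3Q_0}+\sum_{j}\big\|\{T(f_k\chi_{3P_j})(x)\}\big\|_{l^q}\chi_{P_j}(x),
\end{eqnarray*}
where $\{P_j\}$ is a pairwise disjoint collection of dyadic subcubes of $Q_0$ with $\sum_j|P_j|\le \frac12|Q_0|$. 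To get this I would run the Calderón–Zygmund stopping-time argument simultaneously on the two competing quantities that appear on the right-hand side of Lemma \ref{le2.2}: the ``local'' part $\|\{f_k\}\|_{l^q}$ and the grand-maximal part $\|\{\mathcal{M}_T(f_k\chi_{3Q_0})\}\|_{l^q}$. For the first, the $L^1$ Calderón–Zygmund decomposition at a suitable multiple of $\langle\|\{f_k\}\|_{l^q}\rangle_{3Q_0}$ (or rather at the $L(\log L)^\beta$ average, to absorb the $\log^\beta$ loss) produces the cubes; for the second, one uses the weak-type hypothesis \eqref{equ:2.4} together with the elementary fact that $\big|\{x\in Q_0:\|\{\mathcal{M}_T(f_k\chi_{3Q_0})(x)\}\|_{l^q}>c\,\|\{f_k\}\|_{L(\log L)^{\beta},\,3Q_0}\}\big|$ is at most a small fraction of $|Q_0|$ once the constant $c$ is chosen large (depending only on $C_1$, $n$). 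Lemma \ref{le2.2} is exactly what lets me pass from $T(f_k\chi_{3Q_0})$ on $Q_0\setminus\bigcup_j P_j$ back to the local average plus the grand-maximal term, so the two estimates combine to show that outside $\bigcup P_j$ the left side is $\lesssim \|\{f_k\}\|_{L(\log L)^{\beta},\,3Q_0}$, and on $P_j$ we simply recurse.

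Once the one-step recursion is in hand, I would iterate it. Starting from a large cube $Q_0^0$ containing the supports of $f_1,\dots,f_N$ (or from the $3^n$ translated dyadic cubes needed to cover $\mathbb{R}^n$ by cubes whose triples are controlled — this is the source of the $\tfrac{1}{3^n}$ in the sparseness constant), apply the recursion to each $P_j$ in place of $Q_0$, obtaining subcubes $\{P_{j,i}\}$, and continue. Collecting all cubes produced at every generation gives a family $\mathcal{S}$; the condition $\sum_j|P_j|\le\frac12|Q_0|$ at each step guarantees that setting $E_Q=Q\setminus\bigcup_{Q'\subsetneq Q,\,Q'\in\mathcal{S}}Q'$ yields $|E_Q|\ge\frac12|Q|$ and the $E_Q$ are pairwise disjoint, i.e. $\mathcal{S}$ is $\frac12$-sparse (and $\frac12\cdot\frac{1}{3^n}$-sparse after the translation-covering reduction). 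Summing the local bounds telescopically over the generations gives, for a.e. $x$,
\begin{eqnarray*}
\|\{Tf_k(x)\}\|_{l^q}\lesssim \sum_{Q\in\mathcal{S}}\|\{f_k\}\|_{L(\log L)^{\beta},\,Q}\,\chi_Q(x)=\mathcal{A}_{\mathcal{S},\,L(\log L)^{\beta}}\big(\|\{f_k\}\|_{l^q}\big)(x),
\end{eqnarray*}
which is \eqref{equ:2.5}. The boundedness of $T$ on $L^q(\mathbb{R}^n)$ is used both in Lemma \ref{le2.2} and to guarantee that all the truncated objects are finite a.e., so the recursion is legitimate; the finiteness/compact support hypotheses on $f_1,\dots,f_N$ make the reduction to a single starting cube clean and make all the sums over generations finite in practice.

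The main obstacle is the vector-valued adaptation of the stopping-time step: one must run a single Calderón–Zygmund decomposition that controls the $l^q$-norm of the whole sequence at once, rather than a separate decomposition for each $k$, so that the \emph{same} sparse family $\mathcal{S}$ works for every coordinate. This is precisely why Lemma \ref{le2.2} is phrased for $\|\{T(f_k\chi_{3Q_0})(x)\}\|_{l^q}$ and why the hypothesis \eqref{equ:2.4} is stated as a weak-type bound on $\|\{\mathcal{M}_Tf_k\}\|_{l^q}$ — with those two vector-valued inputs available, the scalar argument of \cite{ler3} goes through essentially verbatim, the only genuinely new bookkeeping being the $L(\log L)^{\beta}$ (rather than $L^1$) averaging needed to absorb the logarithmic factor in \eqref{equ:2.4} when $\beta>0$, and checking that the doubling/normalization constants one loses at each generation are summable.
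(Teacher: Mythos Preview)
Your proposal is correct and follows essentially the same approach as the paper: establish the one-step recursive estimate on a fixed cube $Q_0$ via Lemma~\ref{le2.2} and the weak-type hypothesis~\eqref{equ:2.4} (the paper does this by applying a Calder\'on--Zygmund decomposition to the characteristic function of the exceptional set $E$, which is equivalent to your simultaneous stopping-time description), then iterate and globalize. One small clarification: the factor $\tfrac{1}{3^n}$ in the sparseness constant arises not from $3^n$ shifted grids but from passing from the $\tfrac12$-sparse family $\{Q\}$ to $\mathcal{S}=\{3Q\}$, since $|E_Q|\ge\tfrac12|Q|=\tfrac{1}{2\cdot 3^n}|3Q|$.
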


\begin{proof} We employ the ideas in \cite{ler3}.  We claim that for each cube $Q_0\subset \mathbb{R}^n$,  there exist pairwise disjoint cubes $\{P_j\}\subset \mathcal{D}(Q_0)$, such that $\sum_{j}|P_j|\leq \frac{1}{2}|Q_0|$, and for a. e. $x\in Q_0$,
\begin{eqnarray}\label{equ:2.6}
\big\|\big\{T(f_k\chi_{3Q_0})(x)\big\}\big\|_{l^q}\chi_{Q_0}(x)&\le &C\big\|\|\{f_k\}\|_{l^q}\big\|_{L (\log L)^{\beta};\,3Q_0}\\
&&+\sum_{j}\|\{T(f_k\chi_{3P_j})(x)\}\|_{l^q}\chi_{P_j}(x).\nonumber
\end{eqnarray}
Let $C_2\in (1,\,\infty)$ be a  constant which will be chosen later.
It  follows from (\ref{equ:2.4}) that
\begin{eqnarray*}&&\big|\big\{x\in Q_0: \|\{\mathcal{M}_{T}(f_k\chi_{3Q_0})(x)\}\|_{l^q}>C_2\big\|\|\{f_k\}\|_{l^q}\big\|_{L (\log L)^{\beta};3Q_0}\big\}\big|\\
&&\quad\leq C_1\int_{3Q_0}\frac{\|\{f_k(y)\}\|_{l^q}}{C_2\big\|\|\{f_k\}\|_{l^q}\big\|_{L (\log L)^{\beta};3Q_0}}\log^{\beta} \big({\rm e}+\frac{\|\{f_k(y)\}\|_{l^q}}{C_2\big\|\|\{f_k\}\|_{l^q}\big\|_{L (\log L)^{\beta};3Q_0}}\big){\rm d}y\nonumber\\
&&\quad \leq \frac{C_1}{C_2}\int_{3Q_0}\frac{\|\{f_k(y)\}\|_{l^q}}{\big\|\|\{f_k\}\|_{l^q}\big\|_{L (\log L)^{\beta};\,3Q_0}}\log^{\beta}  \big({\rm e}+\frac{\|\{f_k(y)\}\|_{l^q}}{\big\|\|\{f_k\}\|_{l^q}\big\|_{L (\log L)^{\beta};\,3Q_0}}\big){\rm d}y\nonumber\\
&&\quad\le 3^n\frac{C_1}{C_2}|Q_0|.\nonumber
\end{eqnarray*}
Let \begin{eqnarray*}
E&=&\big\{y\in Q_0:\, \|\{f_k(x)\}\|_{l^q}>C_2\big\|\|\{f_k\}\|_{l^q}\big\|_{L (\log L)^{\beta};\,3Q_0}\big\}\\
&&\cup \big\{y\in Q_0:\, \|\{\mathcal{M}_{T}(f_k\chi_{3Q_0})(y)\}\|_{l^q}>C_2\big\|\|\{f_k\}\|_{l^q}\big\|_{L (\log L)^{\beta};\,3Q_0}\big\}.\end{eqnarray*}
Then $|E|\leq \frac{1}{2^{n+2}}|Q_0|$ if we choose $C_2$ large enough.

Now on cube $Q_0$, we apply the Calder\'on-Zygmund decomposition to $\chi_{E}$ at level $\frac{1}{2^{n+1}}$, we then obtain pairwise disjoint cubes $\{P_j\}\subset \mathcal{D}(Q_0)$, such that
$$\frac{1}{2^{n+1}}|P_j|\leq |P_j\cap E|\leq \frac{1}{2}|P_j|$$
and $|E\backslash\cup_jP_j|=0$.  Observe that $\sum_j|P_j|\leq \frac{1}{2}|Q_0|$ and $P_j\cap E^c\not =\emptyset.$ Therefore,
$${\rm ess}\sup_{\xi\in P_j}\|\{T(f_k\chi_{3Q_0\backslash 3P_j})(\xi)\}\|_{l^q}\leq C_2\big\|\|\{f_k\}\|_{l^q}\big\|_{L (\log L)^{\beta};\,3Q_0}.$$
By Lemma \ref{le2.2}, we also have that for a. e. $x\in Q_0\backslash \cup_jP_j$,
$$\|\{T(f_k\chi_{3Q_0})(x)\}\|_{l^q}\leq C_2\big\|\|\{f_k\}\|_{l^q}\big\|_{L (\log L)^{\beta};\,3Q_0}.$$
Observe that
\begin{eqnarray*}
&&\big\|\{T(f_k\chi_{3Q_0})(x)\}\big\|_{l^q}\chi_{Q_0}(x)\le \big\|\{T(f_k\chi_{3Q_0})(x)\}\big\|_{l^q}\chi_{Q_0\backslash \big(\cup_jP_j\big)}(x)\\
&&\quad+\sum_j\big\|\{T(f_k\chi_{3Q_0\backslash 3P_j})(x)\}\big\|_{l^q}\chi_{P_j}(x)
+\sum_j\big\|\{T(f_k\chi_{3P_j})(x)\}\big\|_{l^q}\chi_{P_j}(x)\\
&&\quad\leq 2C_2\big\|\|\{f_k\}\|_{l^q}\big\|_{L (\log L)^{\beta};\,3Q_0}+\sum_j\big\|\{T_A(f_k\chi_{3P_j})(x)\}\big\|_{l^q}\chi_{P_j}(x).
\end{eqnarray*}
The inequality (\ref{equ:2.6}) now follows directly.

We can now conclude the proof of Theorem \ref{th2.1}. As it was proved in \cite{ler3}, the last estimate shows that there exists a $\frac{1}{2}$-sparse family $\mathcal{F}\subset \mathcal{D}(Q_0)$, such that for a. e. $x\in Q_0$,
\begin{eqnarray*}
\big\|\big\{T(f_k\chi_{3Q_0})(x)\big\}\big\|_{l^q}\chi_{Q_0}(x)\lesssim \sum_{Q\in \mathcal{F}}\big\|\|\{f_k\}\|_{l^{q}}\big\|_{L(\log L)^{\beta},\,3Q}\chi_{Q}(x).
\end{eqnarray*}
Recalling that $\{f_k\}_{1\leq k\leq N}$ are functions in $L^1(\mathbb{R}^n)$ with compact supports, we can take now a partition of $\mathbb{R}^n$ by cubes $Q_j$ such that $\cup_{k=1}^N{\rm supp}\,f_k\subset 3Q_j$
for each $j$ and obtain a $\frac{1}{2}$-
sparse family $\mathcal{F}_j\subset\mathcal{D}(Q_j)$ such that  for a. e. $x\in Q_j$,
$$
\big\|\big\{T(f_k\chi_{3Q_j})(x)\big\}\big\|_{l^q}\chi_{Q_j}(x)\lesssim \sum_{Q\in \mathcal\mathcal{F}_j}\big\|\|\{f_i^k\}\|_{l^{q}}\big\|_{L(\log L)^{\beta},\,3Q}\chi_{Q}(x).
$$
Setting $\mathcal{S}=\{3Q: Q\in\cup_j\mathcal{F}_j\}$, we see that (\ref{equ:2.5}) holds for $\mathcal{S}$ and a. e. $x\in\mathbb{R}^n$. \end{proof}

\section{Proof of Theorem \ref{t1.1}}

To prove our theorem \ref{t1.1}, we will employ some  lemmas.
\begin{lemma}\label{le3.1}
Let $A$ be a function on $\mathbb{R}^n$ with derivatives of order one in $L^q(\mathbb{R}^n)$ for some $q\in (n,\,\infty]$. Then
$$|A(x)-A(y)| \lesssim|x-y|\Big(\frac{1}{|I_x^y|}\int_{I_x^y}|\nabla A(y)|^qdy\Big)^{\frac{1}{q}},$$
where
$I_x^y$ is the cube centered at $x$ and having side length $2|x-y|.$
\end{lemma}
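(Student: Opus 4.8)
The plan is to reduce the claimed bound to a Poincar\'e-type estimate on the cube $Q:=I_x^y$ by inserting its average $\langle A\rangle_Q$ and estimating $|A(x)-\langle A\rangle_Q|$ and $|A(y)-\langle A\rangle_Q|$ separately. Two elementary observations get things started: first, $Q$ contains both $x$ (its center) and $y$ (since $|y_i-x_i|\le|x-y|$ for each coordinate $i$), it is convex, and $\mathrm{diam}\,Q=2\sqrt n\,|x-y|$; second, since $q>n$, having first-order derivatives in $L^q(\mathbb{R}^n)$ forces $A$ to agree a.e.\ with a function that is continuous and absolutely continuous on almost every line (Morrey's embedding $W^{1,q}\hookrightarrow C^{0,1-n/q}$), so the fundamental theorem of calculus along segments is available.

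With $w\in\{x,y\}$ fixed, write, for a.e.\ $z\in Q$,
$$A(z)-A(w)=\int_0^1\nabla A\big(w+t(z-w)\big)\cdot(z-w)\,\mathrm{d}t,$$
average over $z\in Q$, take absolute values, and bound $|z-w|\le 2\sqrt n\,|x-y|$ to get, via Fubini,
$$\big|\langle A\rangle_Q-A(w)\big|\lesssim_n|x-y|\int_0^1\Big(\frac1{|Q|}\int_Q\big|\nabla A\big(w+t(z-w)\big)\big|\,\mathrm{d}z\Big)\mathrm{d}t.$$
For fixed $t\in(0,1)$ the change of variables $u=w+t(z-w)$ maps $Q$ onto the dilate $w+t(Q-w)$, which by convexity is contained in $Q$ and has volume $t^n|Q|$; hence the inner average equals $\frac1{t^n|Q|}\int_{w+t(Q-w)}|\nabla A(u)|\,\mathrm{d}u$, which by H\"older's inequality and $w+t(Q-w)\subset Q$ is at most $t^{-n/q}\big(\frac1{|Q|}\int_Q|\nabla A|^q\big)^{1/q}$. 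Since $q>n$, $\int_0^1 t^{-n/q}\,\mathrm{d}t=(1-n/q)^{-1}<\infty$, so $\big|\langle A\rangle_Q-A(w)\big|\lesssim_{n,q}|x-y|\big(\frac1{|Q|}\int_Q|\nabla A|^q\big)^{1/q}$ for $w=x$ and $w=y$. Adding the two estimates and using $|A(x)-A(y)|\le|A(x)-\langle A\rangle_Q|+|A(y)-\langle A\rangle_Q|$ yields the lemma. (The case $q=\infty$ is identical, with the $L^q$ average replaced by $\|\nabla A\|_{L^\infty(Q)}$ and the factor $t^{-n/q}$ replaced by $1$.)

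The only delicate point is the regularity justification of the segment identity: if $A$ is assumed merely to have distributional first derivatives in $L^q$ rather than being $C^1$, one first proves the inequality for the mollifications $A_\varepsilon=A*\varphi_\varepsilon$, for which the identity is classical, with constants independent of $\varepsilon$, and then lets $\varepsilon\to0$; the right-hand side passes to the limit because $\nabla A_\varepsilon\to\nabla A$ in $L^q_{\mathrm{loc}}$ and the left-hand side converges because $A_\varepsilon\to A$ uniformly on compact sets (again by $q>n$). I expect this limiting argument, together with keeping track that the dilated cubes $w+t(Q-w)$ stay inside $Q$ so that all the averages can be dominated by the single average over $I_x^y$, to be the part that needs the most care; the rest is a direct computation.
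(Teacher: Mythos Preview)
The paper does not give its own proof of this lemma; it simply refers the reader to Cohen \cite{cohen}. Your argument is correct and self-contained: it is the standard Poincar\'e/Morrey-type proof, splitting through $\langle A\rangle_Q$, using the line-integral representation, the change of variables $u=w+t(z-w)$ together with the convexity observation $w+t(Q-w)\subset Q$ for $w\in Q$, and then the key integrability of $t^{-n/q}$ on $(0,1)$ when $q>n$. The mollification step you sketch is the right way to pass from smooth $A$ to the general $W^{1,q}_{\mathrm{loc}}$ case. Nothing needs fixing.
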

For the proof of Lemma \ref{le3.1}, see \cite{cohen}.

For a fixed $\beta\in [0,\,\infty)$, let $M_{L(\log L)^{\beta}}$ be the maximal operator defined by $$M_{L(\log L)^{\beta}}f(x)=\sup_{Q\ni x}\|f\|_{L(\log L)^{\beta},\,Q},$$
where the supremum is take over all cubes containing $x$.
It is well known (see \cite{perez1}) that for any $\lambda>0$,
\begin{eqnarray}\label{equ:3.1}|\{x\in\mathbb{R}^n:\,M_{L(\log L)^{\beta}}f(x)>\lambda\}|\lesssim \int_{\mathbb{R}^n}\frac{|f(x)|}{\lambda}\log^{\beta} \Big(1+\frac{|f(x)|}{\lambda}\Big){\rm d}x.\end{eqnarray}

\begin{lemma}\label{le3.2}
Let $l\in \mathbb{N}$ and $q\in (1,\,\infty)$.  Then the maximal operator $M_{L(\log L)^l}$ satisfies that
\begin{eqnarray}\label{equ:3.2}&&\big|\big\{x\in\mathbb{R}^n:\big\|\big\{M_{L(\log L)^{l}}f_k(x)\big\}\big\|_{l^q}>\lambda\big\}\big|\nonumber\\
&&\quad\lesssim \int_{\mathbb{R}^n}
\frac{\|\{f_k(x)\}\|_{l^q}}{\lambda}\log^{l}\Big(1+\frac{\|\{f_k(x)\}\|_{l^q}}{\lambda}\Big){\rm d}x.
\end{eqnarray}
\end{lemma}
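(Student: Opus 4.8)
The plan is to reduce the vector-valued weak-type estimate to the known scalar estimate \eqref{equ:3.1} for $M_{L(\log L)^l}$ by exploiting the submultiplicativity of the Orlicz average together with a standard linearization/duality trick for the $l^q$ norm. First I would recall the key structural fact about Orlicz maximal operators: for the Young function $\Phi_l(t)=t\log^l(\mathrm{e}+t)$, the complementary function is (equivalent to) $\Psi_l(t)=t/\log^{l}(\mathrm{e}+t)$ up to constants, and more importantly $M_{L(\log L)^l}$ is pointwise comparable to the $l$-fold iteration $M\circ M\circ\cdots\circ M$ (see \cite{perez1}). Since $M$ maps the vector-valued weak space $L^{1,\infty}(l^q)$ into itself — this is the Fefferman--Stein vector-valued inequality at the endpoint $p=1$, which holds with constant depending only on $n$ and $q$ — one is tempted to simply iterate. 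But iterating the \emph{weak-type} bound $l$ times does not directly work because $L^{1,\infty}$ is not preserved under composition with itself in a quantitatively clean way; one loses a logarithm at each step, which is exactly why the target right-hand side carries $\log^l$.

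The cleaner route, and the one I would carry out, is the following. Fix $\lambda>0$ and set $g(x)=\|\{f_k(x)\}\|_{l^q}$. The main point is the pointwise domination
$$\big\|\{M_{L(\log L)^l}f_k(x)\}\big\|_{l^q}\le C\, M_{L(\log L)^l}g(x).$$
To see this, note that for each cube $Q\ni x$ and each $k$, by definition $\|f_k\|_{L(\log L)^l,Q}\le \langle |f_k|\rangle_{Q}\cdot(\text{harmless factor})$ is too crude; instead use that the Orlicz norm is monotone and that, by Minkowski's inequality in $l^q$ applied inside the averaging, $\|\,\|\{\,|f_k|\,\}\|_{l^q}\,\|_{L(\log L)^l,Q}=\|g\|_{L(\log L)^l,Q}$ dominates $\big(\sum_k\|f_k\|_{L(\log L)^l,Q}^q\big)^{1/q}$ whenever $q\ge 1$. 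Concretely, for $q\ge 1$ the map $\{a_k\}\mapsto\|\{a_k\}\|_{l^q}$ is a norm, so for any normalization constant $\mu>0$,
$$\Big(\sum_k\frac{1}{|Q|}\int_Q\frac{|f_k|}{\mu}\log^l\!\Big(1+\frac{|f_k|}{\mu}\Big)\Big)^{?}$$
must be replaced by the correct Orlicz-space inequality: since $t\mapsto t\log^l(1+t)$ is convex and increasing, $\|\cdot\|_{L(\log L)^l,Q}$ is a genuine norm, and $\|\{f_k\}\|_{l^q}$ pointwise dominates each $|f_k|$, so taking Orlicz norms gives $\|f_k\|_{L(\log L)^l,Q}\le\|g\|_{L(\log L)^l,Q}$ for every $k$; but we need the $l^q$ sum, so instead apply the vector-valued Orlicz inequality $\big\|\{\|f_k\|_{L(\log L)^l,Q}\}\big\|_{l^q}\le C\,\|\,\|\{f_k\}\|_{l^q}\,\|_{L(\log L)^l,Q}$, which follows from the lattice property of Orlicz spaces exactly as the analogous inequality for $L^r$ averages. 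Taking the supremum over $Q\ni x$ yields the claimed pointwise bound with a constant $C=C(q)$.

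Once the pointwise domination is established, the lemma follows immediately: the set $\{x: \|\{M_{L(\log L)^l}f_k(x)\}\|_{l^q}>\lambda\}$ is contained in $\{x: C\,M_{L(\log L)^l}g(x)>\lambda\}=\{x: M_{L(\log L)^l}g(x)>\lambda/C\}$, and applying the scalar estimate \eqref{equ:3.1} to $g$ at level $\lambda/C$ gives
$$\big|\{x: M_{L(\log L)^l}g(x)>\lambda/C\}\big|\lesssim\int_{\mathbb{R}^n}\frac{g(x)}{\lambda}\log^l\!\Big(1+\frac{g(x)}{\lambda}\Big)\,\mathrm{d}x,$$
which is exactly \eqref{equ:3.2} after absorbing the constant $C$ into the doubling of $\Phi_l$ (note $\Phi_l(Ct)\le C'\Phi_l(t)$). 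The main obstacle — the only nonroutine point — is justifying the vector-valued Orlicz inequality $\big\|\{\|f_k\|_{L(\log L)^l,Q}\}\big\|_{l^q}\lesssim\|\,\|\{f_k\}\|_{l^q}\,\|_{L(\log L)^l,Q}$; I expect this to follow from a direct computation using the definition of the Luxemburg norm and the triangle inequality in $l^q$ (for $q\ge1$), choosing the common normalizing constant $\mu=\|\,\|\{f_k\}\|_{l^q}\,\|_{L(\log L)^l,3Q}$ and checking that $\frac{1}{|Q|}\int_Q \sum_k\big(\frac{|f_k|}{\mu}\log^{l/q}(\cdots)\big)^{q}$ stays bounded — the convexity of $t\mapsto t\log^l(1+t)$ and the elementary inequality $\log^l(1+\sum|a_k|)\le\log^l(1+\|\{a_k\}\|_{l^q})+C$ for $q\ge1$ do the rest.
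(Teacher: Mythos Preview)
Your proposal has a genuine gap at its central step. You assert the pointwise domination
\[
\big\|\{M_{L(\log L)^l}f_k(x)\}\big\|_{l^q}\le C\, M_{L(\log L)^l}g(x),\qquad g=\|\{f_k\}\|_{l^q},
\]
arguing via the single-cube inequality $\big(\sum_k\|f_k\|_{L(\log L)^l,Q}^q\big)^{1/q}\le C\|g\|_{L(\log L)^l,Q}$ and then ``taking the supremum over $Q\ni x$.'' But taking the supremum of both sides of the single-cube inequality only gives
\[
\sup_{Q\ni x}\Big(\sum_k\|f_k\|_{L(\log L)^l,Q}^q\Big)^{1/q}\le C\,M_{L(\log L)^l}g(x),
\]
and the left side here is \emph{smaller} than $\big(\sum_k(\sup_{Q\ni x}\|f_k\|_{L(\log L)^l,Q})^q\big)^{1/q}=\|\{M_{L(\log L)^l}f_k(x)\}\|_{l^q}$, because on the latter each $k$ is allowed its own optimizing cube. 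The step ``take the sup'' goes in the wrong direction.

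In fact the pointwise inequality is false already for $l=0$. In dimension one with $q=2$, set $f_k=2^k\chi_{[2^k,\,2^k+1]}$ for $k=1,\dots,N$. At $x=0$ one has $Mf_k(0)\approx 1$ for every $k$ (use the interval $[0,2^k+1]$), hence $\|\{Mf_k(0)\}\|_{l^2}\approx\sqrt{N}$; but the supports are disjoint, so $g=\sum_k 2^k\chi_{[2^k,\,2^k+1]}$, and every interval $[0,2^j+1]$ gives $\langle g\rangle\approx 2$, so $Mg(0)\approx 1$. The same family gives $\|\{M_{L\log L}f_k(0)\}\|_{l^2}\approx N^{3/2}$ while $M_{L\log L}g(0)\approx N$. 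This is precisely why the Fefferman--Stein vector-valued maximal theorem is not a corollary of the scalar weak $(1,1)$ bound, and your reduction to the scalar estimate \eqref{equ:3.1} cannot work.

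The paper proceeds quite differently. It uses $M_{L(\log L)^l}\approx M^{l+1}$, passes to a dyadic grid, and controls $\|\{M_{\mathscr{D}}^{2}f_k\}\|_{l^q}$ through the sharp-function machinery \eqref{eq2.1}--\eqref{eq2.2}: one proves
$M^{\sharp}_{\mathscr{D},\delta}\big(\|\{M_{\mathscr{D}}f_k\}\|_{l^q}\big)\lesssim M_{\mathscr{D}}(\|\{f_k\}\|_{l^q})$
together with \eqref{equ:3.8}, and then chains these estimates. The vector-valued information enters only through the endpoint Fefferman--Stein inequality (used as a black box inside the sharp-function bounds), never through a pointwise comparison of the type you attempt.
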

\begin{proof} We only consider the case $l=1$. The case $l\geq 2$ can be proved in the same way. As it was pointed out in \cite{cana} (see also \cite{perez}) that
\begin{eqnarray}\label{equ:3.3}M_{L \log L}f(x)\approx M^{2}f(x),\end{eqnarray} with $M^2$  the operator $M$ iterated twice. Thus, it suffices to show the operator $M^l$ satisfies (\ref{equ:3.2}). On the other hand, by the well known one-third trick  (see \cite[Lemma 2.5]{hlp}), we only need to prove that, for each dyadic grid $\mathscr{D}$, the inequality
\begin{eqnarray}\label{equ:3.4}&&\big|\big\{x\in\mathbb{R}^n:\big\|\big\{M_{\mathscr{D}}(M_{\mathscr{D}}f_k)(x)\big\}
\big\|_{l^q}>1\big\}\big|\\
&&\quad\lesssim \int_{\mathbb{R}^n}
\|\{f_k(x)\}\|_{l^q}\log\big(1+\|\{f_k(x)\}\|_{l^q}\big){\rm d}x.\nonumber
\end{eqnarray}holds when $\{f_k\}$ is finite. As in the proof of Lemma 8.1 in \cite{csmp}, we can very that
for each cube $Q\in\mathscr{D}$, $\delta\in (0,\frac{1}{q})$ and $\lambda\in (0,\,1)$,
\begin{eqnarray}\label{equ3.5}
&&\inf_{c\in\mathbb{C}}\Big(\frac{1}{|Q|}\int_Q\Big|\|\{M_{\mathscr{D}}f_k(y)\}\|_{l^q}-c\Big|^{\delta}{\rm d}y\Big)^{\frac{1}{\delta}}\\
&&\quad\lesssim\Big(\frac{1}{|Q|}\int_{Q}\|\{M_{\mathscr{D}}(f_k\chi_Q)(y)\})\|_{l^q}^{\delta}{\rm d}y\Big)^{\frac{1}{\delta}}\lesssim\langle\|\{f_k\chi_Q\}\|_{l^q}\rangle_{Q},\nonumber
\end{eqnarray}
where in the last inequality, we invoked the fact that $M_{\mathscr{D}}$ is bounded from $L^1(l^q,\,\mathbb{R}^n)$ to $L^{1,\,\infty}(l^q,\,\mathbb{R}^n)$. This, in turn, implies that
\begin{eqnarray}\label{eq4.7}M_{\mathscr{D},\,\delta}^{\sharp}\big(\|\{M_{\mathscr{D}}f_k\}\|_{l^q}\big)(x)\lesssim M_{\mathscr{D}}\big(\|\{f_k\}\|_{l^q}\big)(x).\end{eqnarray}
Again by the argument used in the proof of Lemma 8.1 in \cite{csmp}, we can verify that for each cube $Q\in \mathscr{D}$,
$$\inf_{c\in\mathbb{C}}\frac{1}{|Q|}\int_Q\Big|\|\{M_{\mathscr{D}}f_k(y)\}\|_{l^q}-c\Big|{\rm d}y\lesssim \frac{1}{|Q|}\int_{Q}\|\{M(f_k\chi_Q)\}\|_{l^q}{\rm d}y.
$$Therefore,
\begin{eqnarray}\label{eq4.8}M^{\sharp}_{\mathscr{D}}\big(\|\{M_{\mathscr{D}}f_k\}\|_{l^q}\big)(x)\lesssim \sup_{Q\ni x}\langle \|\{M_{\mathscr{D}}(f_k\chi_Q\})\|_{l^q}\rangle_Q.
\end{eqnarray}

Now we claim that for each cube $Q$,
\begin{eqnarray}\label{equ:3.8}\langle \|\{M(f_k\chi_Q\})\|_{l^q}\rangle_Q\lesssim \big\|\|\{f_k\}\|_{l^q}\big\|_{L\log L,\,Q}.
\end{eqnarray}
Let $\Phi(t)=t\log^{-1} ({\rm e}+t^{-1})$. If we can prove (\ref{equ:3.8}), it then follows from (\ref{eq2.1}), (\ref{eq4.7}),  (\ref{eq2.2}), (\ref{eq4.8}) and (\ref{equ:3.8}) that
\begin{eqnarray*}
&&\big|\big\{x\in\mathbb{R}^n:\big\|\big\{M_{\mathscr{D}}(M_{\mathscr{D}}f_k)(x)\big\}\big\|_{l^q}>1\big\}\big|\\
&&\quad\lesssim\sup_{\lambda>0}\Phi(\lambda)\big|\{x\in\mathbb{R}^n:
M_{\mathscr{D},\,\delta}^{\sharp}\big(\big\|\big\{M_{\mathscr{D}}(M_{\mathscr{D}}f_k)\big\}\big\|_{l^q}\big)(x)>\lambda\}\big|\\
&&\quad\lesssim\sup_{\lambda>0}\Phi(\lambda)\big|\{x\in\mathbb{R}^n:
M_{\mathscr{D}}\big(\|\{M_{\mathscr{D}}f_k\}\|_{l^q}\big)(x)>\lambda\}\big|\\
&&\quad\lesssim\sup_{\lambda>0}\Phi(\lambda)\big|\{x\in\mathbb{R}^n:M_{\mathscr{D}}^{\sharp}\big(\|\{M_{\mathscr{D}}f_k\}
\|_{l^q}\big)(x)>\lambda\}\big|\\
&&\quad\lesssim\sup_{\lambda>0}\Phi(\lambda)\big|\{x\in\mathbb{R}^n:\,M_{L\log L}\big(\|\{f_k\}
\|_{l^q}\big)(x)>\lambda\}\big|\\
&&\quad\lesssim\int_{\mathbb{R}^n}
\|\{f_k(x)\}\|_{l^q}\log\big(1+\|\{f_k(x)\}\|_{l^q}\big){\rm d}x,
\end{eqnarray*}
which gives (\ref{equ:3.4}).

We now prove (\ref{equ:3.8}). We may assume that $\big\|\|\{f_k\}\|_{l^q}\big\|_{L\log L,\,Q}=1$, which implies that
$$\int_{Q}\|\{f_k(y)\}\|_{l^q}\log (1+\|\{f_k(y)\}\|_{l^q}){\rm d}y\leq |Q|.$$
On the other hand, checking the proof of the Fefferman-Stein maximal inequality (see \cite{fes}), we see that for each $\lambda>0$,
\begin{eqnarray*}
\big|\big\{x\in\mathbb{R}^n:\,\|\{Mh_k(x)\}\|_{l^q}>\lambda\big\}\big|&\lesssim&\frac{1}{\lambda^2}\int_{\{\|\{h_k(y)\}\|_{l^q}\leq \lambda\}}
\|\{h_k(y)\}\|_{l^q}^2{\rm d}y\\
&&+\frac{1}{\lambda}\int_{\{\|\{h_k(y)\}\|_{l^q}>\lambda\}}\|\{h_k(y)\}\|_{l^q}{\rm d}y.
\end{eqnarray*}
We now obtain that
\begin{eqnarray*}
\int_{Q}\|\{M(f_k\chi_Q)(y)\}\|_{l^q}{\rm d}y&=&\int_{\{y\in Q:\,\|\{M(f_k\chi_Q)(y)\}\|_{l^q}\leq 1\}}\|\{M(f_k\chi_Q(y)\})\|_{l^q}{\rm d}y\\
&&+\int_{\{y\in Q:\,\|\{M(f_k\chi_Q)(y)\}\|_{l^q}>1\}}\|\{M(f_k\chi_Q)(y)\}\|_{l^q}{\rm d}y\\
&\lesssim&|Q|+\int_{1}^{\infty}\int_{\{x\in Q:\|\{f_k(x)\}\|_{l^q}\leq\lambda\}}\|\{f_k(x)\}\|_{l^q}^2{\rm d}x\frac{{\rm d}\lambda}{\lambda^2}\\
&&+\int_{1}^{\infty}\int_{\{x\in Q:\|\{f_k(x)\}\|_{l^q}>\lambda\}}\|\{f_k(x)\}\|_{l^q}{\rm d}x
\frac{1}{\lambda}{\rm d}\lambda\\
&\lesssim&|Q|.
\end{eqnarray*}
This establishes (\ref{equ:3.8}) and completes the proof of Lemma \ref{le3.2}.
\end{proof}

Let $\Omega$ be homogeneous of degree zero. For each $j$ with $1\leq j\leq n$, define the operator $T_j$ as
\begin{eqnarray}\label{equ:3.9}T_jf(x)={\rm p.\,v.}\int_{\mathbb{R}^n}\frac{\Omega(x-y)(x_j-y_j)}{|x-y|^{n+1}}f(y){\rm d}y.
\end{eqnarray}
\begin{lemma}\label{le3.3}Let $q\in (1,\,\infty)$. $T_A$ be the operator defined by (\ref{eq:1.7}). Under the hypothesis of Theorem \ref{t1.1}, for each $\lambda>0$,
\begin{eqnarray}\label{equ:3.10}&&|\{x\in\mathbb{R}^n:\|\{T_Af_k(x)\}\|_{l^q}>\lambda\big\}|\\
&&\quad\lesssim \int_{\mathbb{R}^n}\frac{ \|\{f_k(x)\}\|_{l^q}}{\lambda}\log\Big(1+\frac{ \|\{f_k(x)\}\|_{l^q}}{\lambda}\Big){\rm d}x.\nonumber\end{eqnarray}
\end{lemma}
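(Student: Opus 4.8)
The plan is to prove the weak-type $L\log L$ bound for the vector-valued operator $\|\{T_Af_k\}\|_{l^q}$ by establishing a pointwise control of $\|\{T_Af_k(x)\}\|_{l^q}$ in terms of the Hardy--Littlewood maximal function (iterated and composed) applied to $\|\{f_k\}\|_{l^q}$, and then invoking Lemma \ref{le3.2}. The starting point is Cohen's decomposition of $T_A$: writing $A(x)-A(y)-\nabla A(y)(x-y)$ and comparing $\nabla A(y)$ with the average $\langle\nabla A\rangle_{I_x^y}$ over the cube $I_x^y$ centered at $x$ of side length $2|x-y|$, one splits $T_A$ into a "main" term whose kernel involves $\big(A(x)-A(y)-(x-y)\langle\nabla A\rangle_{I_x^y}\big)$ — controlled via Lemma \ref{le3.1} with $q=\infty$ using $\nabla A\in{\rm BMO}$ — plus commutator-type pieces of the form $[b_j,T_j]$ where $b_j=\partial_jA-\langle\partial_jA\rangle$ and $T_j$ is the homogeneous singular integral in \eqref{equ:3.9}. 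The first, the "main" term, behaves like a fractional-kernel operator and is dominated by $\|\nabla A\|_{{\rm BMO}}\,M\big(\|\{f_k\}\|_{l^q}\big)(x)$, possibly with a logarithmic loss, hence contributes an $M^2$-type term; the commutator pieces produce the genuine $L\log L$ contribution via $M^2\approx M_{L\log L}$.

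The key steps, in order, are: (i) Fix $x$ and for each $k$ decompose $T_Af_k(x)$ using the mean-value-over-cube trick above; the vanishing moment condition \eqref{eq1.6} is used to insert/subtract the polynomial term and make the kernel of the main part integrable. (ii) For the main term, use Lemma \ref{le3.1} (with the $L^\infty$ bound on averages of a BMO function, which costs a factor like $\log$ of the scale ratio, absorbed by the Dini-type hypothesis \eqref{eq1.8} on $\omega_\infty$) to get a pointwise bound by $C\|\nabla A\|_{{\rm BMO}}\,M\big(\|\{f_k\}\|_{l^q}\big)(x)$ after summing in $k$ inside the $l^q$ norm via Minkowski's inequality. (iii) For the commutator parts, one reduces — again after splitting the BMO function around averages on annuli — to a pointwise estimate of the type $\|\{[b_j,T_j]f_k(x)\}\|_{l^q}\lesssim \|\nabla A\|_{{\rm BMO}}\big(M_{L\log L}(\|\{f_k\}\|_{l^q})(x)+M^\sharp\text{-type terms}\big)$; here one uses that each $T_j$ is a classical Calderón--Zygmund operator under \eqref{eq1.8}, so $M^\sharp_\delta([b_j,T_j]g)\lesssim \|b_j\|_{{\rm BMO}}\big(M_r(T_jg)+M_{L\log L}g\big)$ for suitable $r>1$, and the $L^\infty$ continuity-modulus condition controls the $\Omega$-regularity. (iv) Combine: $\|\{T_Af_k(x)\}\|_{l^q}\lesssim \|\nabla A\|_{{\rm BMO}}\,M_{L\log L}\big(\|\{f_k\}\|_{l^q}\big)(x)$ plus lower-order terms already of the same weak-type. (v) Apply Lemma \ref{le3.2} with $l=1$ to deduce \eqref{equ:3.10}.

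I expect the main obstacle to be step (iii), the vector-valued commutator estimate: one must pass the scalar sharp-maximal commutator inequality through the $l^q$ norm, which requires the vector-valued boundedness of each $T_j$ on $L^r(l^q)$ (available for classical CZ operators, and here justified by the Dini condition \eqref{eq1.8}), and must handle the truncation/localization of the BMO multiplier on dyadic annuli so that the "bad" far-field interactions telescope with a summable logarithmic weight — precisely where $\int_0^1\omega_\infty(t)(1+|\log t|)\,\frac{{\rm d}t}{t}<\infty$ is consumed. A secondary technical point is ensuring the a priori finiteness hypotheses in \eqref{eq2.1}--\eqref{eq2.2} (i.e.\ that $M_{\mathscr D,\delta}h$ or $M_{\mathscr D}h$ has the requisite weak-type with weight $\Phi$) hold for $h=\|\{T_Af_k\}\|_{l^q}$; this is handled by first proving everything for finite sequences $\{f_k\}$ of bounded compactly supported functions and a standard limiting argument, exactly as in the proof of Lemma \ref{le3.2}.

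Finally, once \eqref{equ:3.10} is in hand one feeds it, together with the $L^q$-boundedness of $T_A$ (from Hofmann's theorem) and the grand-maximal estimate, into Theorem \ref{th2.1} with $\beta=1$ to obtain the sparse domination $\|\{T_Af_k(x)\}\|_{l^q}\lesssim \mathcal A_{\mathcal S,L\log L}\big(\|\{f_k\}\|_{l^q}\big)(x)$, and then Lemma \ref{le2.1} with $\beta=1$ yields the quantitative weighted bound of Theorem \ref{t1.1}; the maximal operator $T_A^*$ is treated by the same scheme since its grand maximal operator satisfies the same weak-type $(1,L\log L)$ estimate.
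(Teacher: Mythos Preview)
Your proposal has a genuine gap at step (iv): the claimed pointwise bound $\|\{T_Af_k(x)\}\|_{l^q}\lesssim \|\nabla A\|_{{\rm BMO}}\,M_{L\log L}\big(\|\{f_k\}\|_{l^q}\big)(x)$ is simply false. Singular integral operators are never pointwise dominated by maximal functions; this already fails for the Hilbert transform. The same objection applies to step (ii): after using Lemma~\ref{le3.1}, the ``main'' kernel is still of size $|x-y|^{-n}$, so the main term remains a genuine singular integral and cannot be bounded pointwise by $M\big(\|\{f_k\}\|_{l^q}\big)$. In step (iii) you correctly write down a \emph{sharp-maximal} estimate for the commutator pieces, but you then jump to (iv) as if that yielded a pointwise bound on $T_A$ itself, which it does not; at best one gets $M^{\sharp}_{\delta}\big(\|\{T_Af_k\}\|_{l^q}\big)(x)\lesssim M_{L\log L}\big(\|\{f_k\}\|_{l^q}\big)(x)$, and from there one must invoke a Fefferman--Stein inequality such as \eqref{eq2.1} to reach the weak-type conclusion --- Lemma~\ref{le3.2} alone is not enough.

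The paper's proof takes a different and more direct route: it performs a Calder\'on--Zygmund decomposition of the scalar function $\|\{f_k\}\|_{l^q}$ at level $\lambda$, handles the good part via the $L^q$ boundedness of $T_A$, and for the bad part writes, on each CZ cube $Q_j$, $A_j(y)=A(y)-\langle\nabla A\rangle_{Q_j}y$ so that $T_Ab_k$ splits into (a) pieces $T_{A,i}b_k=T_i\big(\sum_j(\partial_iA-\langle\partial_iA\rangle_{Q_j})b_{k,j}\big)$ and (b) a remainder $\sum_jT_A^1b_{k,j}$. For (a), the vector-valued weak $(1,1)$ bound for the classical operators $T_i$ together with the $\exp L$--$L\log L$ H\"older inequality on each $Q_j$ produces exactly the $L\log L$ right-hand side. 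For (b), the vanishing moment of $b_{k,j}$ and a H\"ormander-type estimate (this is where \eqref{eq1.8} is consumed) give an $L^1$ bound off $\cup_j 4nQ_j$. No pointwise domination by maximal functions is claimed or needed.
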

\begin{proof} We will employ the argument from \cite{aj}. Applying the Calder\'on-Zygmund decomposition to  $\|\{f_k(x)\}\|_{l^q}$ at level $\lambda$, we obtain a sequence of cubes $\{Q_j\}_j$ with disjoint interiors, such that
$$\lambda< \langle\|\{f_k\}\|_{l^q}\rangle_{Q_j}\leq 2^n\lambda,$$and $\|\{f_k(x)\}\|_{l^q}\lesssim \lambda$ for a. e. $x\in\mathbb{R}^n\backslash \cup_jQ_j$.
Let
$$g_k(x)=f_k(x)\chi_{\mathbb{R}^n\backslash \cup_jQ_j}(x)+\sum_j\langle f_k\rangle_{Q_j}\chi_{Q_j}(x),$$
and $$b_k(x)=f_k(x)-g_k(x)=\sum_{j}\big(f_k(x)-\langle f_k\rangle_{Q_j}\big)\chi_{Q_j}(x):=\sum_jb_{k,\,j}(x).$$Let $E_{\lambda}=\cup_{n}4n Q_j$.
By the fact that $\|\{g_k\}\|_{L^{\infty}(l^q;\,\mathbb{R}^n)}\lesssim\lambda$ and the assumption (ii), we have that
\begin{eqnarray*}&&|\big\{x\in\mathbb{R}^n:\|\{T_Ag_k(x)\}\|_{l^q}>\lambda/2\}|\\
&&\quad\lesssim |E_{\lambda}|+|\big\{x\in\mathbb{R}^n\backslash E_{\lambda}:\|\{T_Ag_k(x)\}\|_{l^q}>\lambda/2\}|\\
&&\quad\lesssim \lambda^{-1}\|\{f_k\}\|_{L^1(l^q;\,\mathbb{R}^n)}.\end{eqnarray*}
Thus, the proof of (\ref{equ:3.10}) can be reduced to showing that
\begin{eqnarray}\label{equ:3.11}&&\big|\big\{x\in\mathbb{R}^n\backslash E_{\lambda}:\, \|\{T_Ab_k(x)\}\|_{l^q}>\lambda/2\}\big|\\
&&\quad\lesssim \int_{\mathbb{R}^n}\frac{\|\{f_k(x)\}\|_{l^q}}{\lambda}\log \Big({\rm e}+\frac{\|\{f_k(x)\}\|_{l^q}}{\lambda}\Big)
{\rm d}x.\nonumber\end{eqnarray}

We now prove (\ref{equ:3.11}). For each fixed $j$, let $$A_j(y)=A(y)-\langle \nabla A\rangle_{Q_j}y.$$
We can write
\begin{eqnarray*}
T_Ab_{k}(x)&=&\sum_j\int_{\mathbb{R}^n}\frac{\Omega(x-y)}{|x-y|^{n+1}}\big(A_j(x)-A_j(y)\big)b_{k,\,j}(y){\rm d}y\\
&&+\sum_{i=1}^n\int_{\mathbb{R}^n}\Omega(x-y)\frac{x_i-y_i}{|x-y|^{n+1}}\sum_{j}\big(\partial_iA(y)-\langle\partial_iA\rangle_{Q_j}\big)b_{k,\,j}(y){\rm d}y\\
&:=&\sum_{j}T_A^1b_{k,\,j}(y)+\sum_{i=1}^nT_{A,\,i}b_k(y).
\end{eqnarray*}
Invoking Minkowski's inequality, we see that for each $j$,
$$\|\{b_{k,\,j}(x)\}\|_{l^q}\le \big(\|\{b_k\}\|_{l^q}+\lambda\big)\chi_{Q_j}.$$
By the vector-valued Calder\'on-Zygmund theory (see \cite{aj}), we see that for each fixed $1\leq i\leq n$,
\begin{eqnarray}\label{equa:3.12}
&&\big|\big\{x\in\mathbb{R}^n:\, \|\{T_{A,\,i}b_k(y)\}\|_{l^q}>\frac{\lambda}{4n}\big\}\big|\\
&&\quad\lesssim \lambda^{-1}\sum_j\int_{Q}|\nabla A(y)-\langle\partial_kA\rangle_{Q_j}|\|\{b_{k,\,j}(y)\}\|_{l^q}{\rm d}y\nonumber\\
&&\quad\lesssim\lambda^{-1}\sum_{j}|Q_j|\|\nabla A(y)-\langle\partial_kA\rangle_{Q_j}\big\|_{{\rm exp}L,\,Q_j}\big\|\|\{b_{k,\,j}\}\|_{l^q}\big\|_{L\log L,\,Q_j}\nonumber\\
&&\quad\lesssim\int_{\mathbb{R}^n}\frac{\|\{f_k(x)\}\|_{l^q}}{\lambda}\log\Big(1+\frac{\|\{f_k(x)\}\|_{l^q}}{\lambda}
\Big){\rm d}x.\nonumber
\end{eqnarray}
where
$$\|h\|_{{\rm exp}L,\,Q_j}=\inf\Big\{t>0:\,\frac{1}{|Q_j|}\int_{Q_j}{\rm exp}\Big(\frac{|h(y)|}{t}\Big){\rm d}y\leq 2\Big\},$$
and the second inequality follows from the generalization of H\"older's inequality (see \cite[p.64]{rr}), and the last inequality follows from the fact that $$\|h\|_{L\log L,\,Q_j}\le  \lambda+\frac{\lambda}{|Q_j|}\int_{Q_j}\frac{|h(y)|}{\lambda}\log\Big(1+\frac{|h(y)|}{\lambda}\Big){\rm d}y,$$
see \cite[p. 69]{rr}.

It remains to estimate $T_A^1b_{k}$. For each fixed $Q_j$, we choose $x_j\in 3Q_j\backslash 2Q_j$. By vanishing moment of $b_{j,\,k}$, we can write
\begin{eqnarray*}
|T_A^1b_{k,\,j}(x)|&\leq& \frac{1}{|x-x_j|^{n+1}}\int_{\mathbb{R}^n}|A_j(x_j)-A_j(y)||b_{j,\,k}(y)|{\rm d}y\\
&&+\int_{\mathbb{R}^n}\Big|\frac{\Omega(x-y)}{|x-y|^{n+1}}-\frac{\Omega(x-x_j)}{x-x_j|^{n+1}}\Big||A_j(x)-A_j(y)||b_{j,\,k}(y)|{\rm d}y\\
&:=&T_A^{{\rm I}}b_{k,\,j}(x)+T_A^{{\rm II}}b_{k,\,j}(x).
\end{eqnarray*}
By Lemma \ref{le3.1}, we have that
$$\sum_j|T_A^{{\rm I}}b_{k,\,j}(x)|\lesssim \sum_j\frac{1}{|x-x_j|^{n+1}}|Q_j|^{\frac{1}{n}}\|b_{k,\,j}\|_{L^1(\mathbb{R}^n)},$$
which via Minkowski's inequality implies that,
\begin{eqnarray*}
\Big(\sum_k\Big(\sum_j|T_A^{{\rm I}}b_{k,\,j}(x)|\Big)^q\Big)^{\frac{1}{q}}&\lesssim& \sum_j\frac{1}{|x-x_j|^{n+1}}|Q_j|^{\frac{1}{n}}\Big(\sum_k\|b_{k,\,j}\|_{L^1(\mathbb{R}^n)}^q\Big)^{\frac{1}{q}}\\
&\lesssim&\sum_j\frac{1}{|x-x_j|^{n+1}}|Q_j|^{\frac{1}{n}}\|\{b_{k,\,j}\}\|_{L^1(l^q;\,\mathbb{R}^n)}.
\end{eqnarray*}
Therefore,
\begin{eqnarray}\label{equa:3.13}&&\int_{\mathbb{R}^n\backslash E_{\lambda}}\Big(\sum_k\Big(\sum_j|T_A^{{\rm I}}b_{k,\,j}(x)|\Big)^q\Big)^{\frac{1}{q}}{\rm d}x\\
&&\quad\lesssim\sum_j\int_{\mathbb{R}^n\backslash 4nQ_j}\frac{|Q_j|^{\frac{1}{n}}}{|x-x_j|^{n+1}}{\rm d}x\|\{b_{k,\,j}\}\|_{L^1(l^q;\,\mathbb{R}^n)}\nonumber\\
&&\quad\lesssim\|\{b_{k}\}\|_{L^1(l^q;\,\mathbb{R}^n)}.\nonumber
\end{eqnarray}

To estimate $|T_A^{{\rm II}}b_{k,\,j}(x)|$, we first observe that if $y\in Q_j$ and $x\in 2^{d+1}nQ_j\backslash 2^dnQ_j$ with $d\geq 2$, then by Lemma \ref{le3.1},
\begin{eqnarray*}
|A_j(x)-A_j(y)|&\lesssim &|x-y|\Big(\frac{1}{|I_x^y|}\int_{I_x^y}|\nabla A(y)-\langle \nabla A\rangle_{Q_j}|^q{\rm d}y\Big)^{\frac{1}{q}}\\
&\lesssim&|x-y|\Big(\frac{1}{|I_x^y|}\int_{I_x^y}|\nabla A(z)-\langle \nabla A\rangle_{I_x^y}|^q{\rm d}z\Big)^{\frac{1}{q}}\\
&&+|x-y|\big|\langle \nabla A\rangle_{Q_j}-\langle \nabla A\rangle_{I_x^y}\big|\\
&\lesssim&d|x-y|.
\end{eqnarray*}
This, via the continuity condition (1.8), implies that for each $y\in Q_j$,
$$
\sum_{d=2}^{\infty}d\int_{2^{d+1}Q_j\backslash 2^dQ_j}
\Big|\frac{\Omega(x-y)}{|x-y|^{n+1}}-\frac{\Omega(x-x_j)}{x-x_j|^{n+1}}\Big||A_j(x)-A_j(y)|{\rm d}x\lesssim 1.
$$
On the other hand, another application of Minkowski's inequality gives us that
\begin{eqnarray*}
&&\Big(\sum_{k}\Big(\sum_j|T_A^{{\rm II}}b_{k,\,j}(x)|\Big)^q\Big)^{\frac{1}{q}}\lesssim\sum_{j}\Big(\sum_k|T_{A}^{{\rm II}}b_{k,\,j}(x)|^q\Big)^{\frac{1}{q}}\\
&&\quad\lesssim \sum_j\int_{\mathbb{R}^n}\Big|\frac{\Omega(x-y)}{|x-y|^{n+1}}-\frac{\Omega(x-x_j)}{x-x_j|^{n+1}}\Big||A_j(x)-A_j(y)|\|\{b_{k,\,j}(y)\}\|_{l^q}{\rm d}y.
\end{eqnarray*}
We thus deduce that\begin{eqnarray}\label{equa:3.14}&&\int_{\mathbb{R}^n\backslash E_{\lambda}}\Big(\sum_k\Big(\sum_j|T_A^{{\rm II}}b_{k,\,j}(x)|\Big)^q\Big)^{\frac{1}{q}}{\rm d}x\\
&&\quad\lesssim\sum_j\|\{b_{k,\,j}\}\|_{L^1(l^q;\,\mathbb{R}^n)}\lesssim\|\{b_{k}\}\|_{L^1(l^q;\,\mathbb{R}^n)}.\nonumber\end{eqnarray}
Combining the estimates (\ref{equa:3.13}) and (\ref{equa:3.14}) yields
$$\int_{\mathbb{R}^n\backslash E_{\lambda}}\Big(\sum_k\Big(\sum_j|T_A^{1}b_{k,\,j}(x)|\Big)^q\Big)^{\frac{1}{q}}{\rm d}x\lesssim\|\{b_{k}\}\|_{L^1(l^q;\,\mathbb{R}^n)},$$
which, via  the estimates  (\ref{equa:3.12}), leads to (\ref{equ:3.11}) and then completes the proof of Lemma \ref{le3.3}.
\end{proof}

Now let $\gamma\in (0,\,1]$. We know from Theorem 1 in \cite{hl} that,
$$T_A^*f(x)\lesssim_{\gamma} M_{\gamma}(T_Af)(x)+M_{L\log L}f(x).$$
For fixed $0<\delta<\gamma<1$, dyadic grid $\mathscr{D}$ and cube $Q\in \mathscr{D}$. Again as in \cite{csmp},
\begin{eqnarray*}
\inf_{c\in\mathbb{C}}\Big(\frac{1}{|Q|}\int_{Q}\Big|\|\{M_{\mathscr{D},\,\gamma}f_k(y)\}\|_{l^q}-c\Big|^{\delta}{\rm d}y
\Big)^{\frac{1}{\delta}}&\lesssim & \Big(\frac{1}{|Q|}\int_{Q}\|\{M_{\mathscr{D},\,\gamma}(f_k\chi_Q)\}\|_{l^q}^{\delta}\Big)^{\frac{1}{\delta}}\\
&\lesssim & \Big(\frac{1}{|Q|}\int_{Q}\|\{M(|f_k|^{\gamma}\chi_Q)\}\|_{l^{\frac{q}{\gamma}}}^{\frac{\delta}{\gamma}}\Big)^{\frac{1}{\delta}}\\
&\lesssim & \Big(\frac{1}{|Q|}\int_{Q}\|\{|f_k(y)|^{\gamma}\}\|_{l^{\frac{q}{\gamma}}}{\rm d}y\Big)^{\frac{1}{\gamma}},\\
\end{eqnarray*}
since $M$ is bounded from $L^1(l^{\frac{q}{\gamma}};\,\mathbb{R}^n)$ to $L^{1,\,\infty}(l^{\frac{q}{\gamma}};\,\mathbb{R}^n)$. Recall that by \ref{le3.3}, $T_A$ is bounded from $L^1(l^q;\,\mathbb{R}^n)$ to $L^{1,\,\infty}(l^q;\,\mathbb{R}^n)$. By (\ref{eq2.1}) and the argument used in the proof of Lemma \ref{le3.2}, we get that
\begin{eqnarray*}
&&\big|\big\{x\in\mathbb{R}^n:\big\|\big\{M_{\mathscr{D},\,\gamma}(T_Af_k)(x)\big\}\big\|_{l^q}>1\big\}\big|\\
&&\quad\lesssim\sup_{\lambda>0}\Phi(\lambda)\big|\{x\in\mathbb{R}^n:
M_{\mathscr{D},\,\delta}^{\sharp}\big(\big\|\big\{M_{\mathscr{D},\,\gamma}(T_Af_k)\big\}\big\|_{l^q}\big)(x)>\lambda\}\big|\\
&&\quad\lesssim\sup_{\lambda>0}\Phi(\lambda)\big|\{x\in\mathbb{R}^n:
M_{\gamma}\big(\|\{T_Af_k\}\|_{l^q}\big)(x)>\lambda\}\big|\\
&&\quad\lesssim\sup_{\lambda>0}\Phi(\lambda)\lambda^{-1}\sup_{s\geq 2^{-\frac{1}{\gamma}}\lambda}s\big|\{x\in\mathbb{R}^n:
\|\{T_Af_k(x)\}\|_{l^q}>s\}\big|\\
&&\quad\lesssim\int_{\mathbb{R}^n}
\|\{f_k(x)\}\|_{l^q}\log\big(1+\|\{f_k(x)\}\|_{l^q}\big){\rm d}x,
\end{eqnarray*}
where the second-to-last inequality follows from the  inequality (11) in \cite{hl}, and the last inequality follows from Lemma \ref{le3.3}. This, together with the one-third trick and Lemma \ref{le3.2}, leads to that
\begin{eqnarray}\label{equa:3.15}&&\big|\big\{x\in\mathbb{R}^n:\,\|\{T_A^*f_k(x)\}\|_{l^q}>\lambda\big\}\big|\\&&\quad
\lesssim\int_{\mathbb{R}^n}\frac{\|\{f_k(y)\}\|_{l^q}}{\lambda}\log \Big(1+\frac{\|\{f_k(y)\}\|_{l^q}}{\lambda}\Big){\rm d}y.\nonumber\end{eqnarray}

{\it Proof of Theorem \ref{t1.1}}. Let $q\in (1,\,\infty)$. Recall that $T_A$ is bounded on $L^q(\mathbb{R}^n)$. If we can prove that for all $x\in \mathbb{R}^n$,
\begin{eqnarray}\label{equa:3.16}
\mathcal{M}_{T_A}f(x)\le CM_{L\log L}f(x)+T^*_Af(x),\end{eqnarray}
then by Lemma \ref{le3.2} and (\ref{equa:3.15}),
$$\big|\big\{x\in\mathbb{R}^n:\,\|\{\mathcal{M}_{T_A}f_k(x)\}\|_{l^q}>2\lambda\big\}\big|
\lesssim\int_{\mathbb{R}^n}\frac{\|\{f_k(y)\}\|_{l^q}}{\lambda}\log \Big(1+\frac{\|\{f_k(y)\}\|_{l^q}}{\lambda}\Big){\rm d}y.$$
This, via Theorem \ref{th2.1}, implies that for bounded functions $f_1,\dots,f_N$ with compact supports, there exists a sparse family $\mathcal{S}$, such that for a.\,\,e. $x\in\mathbb{R}^n$,
\begin{eqnarray}\label{equa:3.15'}\big\|\{T_Af_k(x)\}\big\|_{l^q}\lesssim \mathcal{A}_{\mathcal{S},\,L\log L}(\|\{f_k\}\|_{l^q})(x).\end{eqnarray}
Our desired conclusion about $ T_A$ then follows from Lemma \ref{le2.1} directly.

We now prove (\ref{equa:3.16}). Let $Q\subset \mathbb{R}^n$ be a cube and $x,\,\xi\in Q$. Denote by $B_x$ the ball centered at $x$ and having diameter $20{\rm diam}\,Q$. As in \cite{ler3}, we can write
\begin{eqnarray*}
|T_A(f\chi_{\mathbb{R}^n\backslash 3Q})(\xi)|&\leq &|T_A(f\chi_{\mathbb{R}^n\backslash B_x})(z)-T_A(f\chi_{\mathbb{R}^n\backslash B_x})(\xi)|\\
&&+|T_A(f\chi_{\mathbb{R}^n\backslash B_x})(z)|+|T_A(f\chi_{B_x\backslash 3Q})(\xi)|.
\end{eqnarray*}
It is obvious that
\begin{eqnarray}\label{equa:3.17}|T_A(f\chi_{\mathbb{R}^n\backslash B_x})(x)|\leq T_A^*f(x).\end{eqnarray}
Let $A_{B_x}(y)=A(y)-\langle\nabla A\rangle _{B_x}y$.  We have that $$A(\xi)-A(y)-\nabla A(y)(\xi-y)=A_{B_x}(\xi)-A_{B_x}(y)-\nabla A_{B_x}(y)(\xi-y).$$
A trivial computation then leads to that
\begin{eqnarray*}
|T_A(f\chi_{B_x\backslash 3Q})(\xi)|&\lesssim&\int_{B_x\backslash 3Q}\frac{|A(\xi)-A(y)-\nabla A(y)(\xi-y)|}{|\xi-y|^{n+1}}|f(y)|{\rm d}y\\
&\lesssim&\frac{1}{|B_x|^{1+\frac{1}{n}}}\int_{B_x\backslash 3Q}\big|A_{B_x}(\xi)-A_{B_x}(y)\big||f(y)|{\rm d}y\\
&&+\frac{1}{|B_x|}\int_{B_x}\big|\nabla A(y)-m_{B_x}(\nabla A)\big||f(y)|{\rm d}y\\
&=&{\rm I}(\xi)+{\rm II}(\xi).
\end{eqnarray*}
Note that for $y\in B_x\backslash 3Q$ and $\xi\in Q$,
$I_{\xi}^y\subset B_x\subset 4nI_{\xi}^y.$
An application of Lemma \ref{le3.1} shows that $$\big|A_{B_x}(\xi)-A_{B_x}(y)\big|\lesssim |B_x|^{\frac{1}{n}},$$ and so
$${\rm I}(\xi)\leq \frac{1}{|B_x|}\int_{B_x}|f(y)|{\rm d}y\lesssim Mf(x).$$
On the other hand, by the generalization of H\"older's inequality and the John-Nirenberg inequality, we deduce that
$${\rm II}(\xi)\lesssim \|f\|_{L\log L,\,B_x}\lesssim M_{L\log L}f(x).$$
Therefore, \begin{eqnarray}\label{equa:3.18}|T_A(f\chi_{B_x\backslash 3Q})(\xi)|\lesssim M_{L\log L}f(x).\end{eqnarray}

To  estimate $|T_A(f\chi_{\mathbb{R}^n\backslash B_x})(x)-T_A(f\chi_{\mathbb{R}^n\backslash B_x})(\xi)|$, we employ the ideas used in \cite{cohen,hy}.
Write
\begin{eqnarray*}
&&\Big|\frac{\Omega(x-y)}{|x-y|^{n+1}}(A(x)-A(y)-\nabla A(y)(x-y))\\
&&\qquad\qquad-\frac{\Omega(\xi-y)}{|\xi-y|^{n+1}}(A(\xi)-A(y)-\nabla A(y)(\xi-y))\Big|\\
&&\quad\lesssim\Big|\frac{\Omega(x-y)}{|x-y|^{n+1}}-\frac{\Omega(\xi-y)}{|\xi-y|^{n+1}}\Big|
\big|A_{B_x}(\xi)-A_{B_x}(y)-\nabla A_{B_x}(y)(\xi-y)\big|\\
&&\qquad+\frac{|\Omega(x-y)|}{|x-y|^{n+1}}\big|A_{B_x}(x)-A_{B_x}(\xi)-\nabla A_{B_x}(y)(x-\xi)\big|\\
&&:=G(x,\,\xi)+H(x,\,\xi).
\end{eqnarray*}
Another application of Lemma \ref{le3.1} gives us that for $q\in (n,\,\infty)$,
\begin{eqnarray*}
\big|A_{B_x}(x)-A_{B_x}(\xi)\big|&\lesssim &|x-\xi|\Big(\frac{1}{|I_x^{\xi}|}\int_{I_{x}^{\xi}}\big|\nabla A(z)-\langle \nabla A\rangle_{B_x}\big|^qdz\Big)^{1/q}\\
&\lesssim&|x-\xi|\big(1+\big|\langle \nabla A\rangle_{B_x}-\langle \nabla A\rangle_{I_x^{\xi}}\big|\big)\\
&\lesssim&|x-\xi|\Big(1+\log\frac{\ell(Q)}{|x-\xi|}\Big).
\end{eqnarray*}
A trivial computation leads to that if $\xi\in Q\backslash \{x\}$, then
\begin{eqnarray*}
\int_{\mathbb{R}^n\backslash B_x}H(x,\xi)|f(y)|{\rm d}y&\lesssim & |x-\xi|\Big(1+\log\big(\frac{\ell(Q)}{|x-\xi|}\big)\Big)\int_{\mathbb{R}^n\backslash B_x}\frac{|f(y)|}{|x-y|^{n+1}}{\rm d}y\\
&&+|x-\xi|\int_{\mathbb{R}^n\backslash B_x}\frac{|\nabla A(y)-\langle \nabla A\rangle_{B_x}|}{|x-y|^{n+1}}|f(y)|{\rm d}y\\
&\lesssim&\frac{|x-\xi|}{\ell(Q)}\Big(1+\log\big(\frac{\ell(Q)}{|x-\xi|}\big)\Big)Mf(x)\\
&&++M_{L\log L}f(x)\lesssim M_{L\log L}f(x).
\end{eqnarray*}
For each $y\in 2^{k}B_x\backslash 2^{k-1}B_x$ with $k\in\mathbb{Z}$, we have by Lemma 4.1 that
$$\big|A_{B_x}(\xi)-A_{B_x}(y)-\nabla A_{B_x}(y)(\xi-y)\big|\lesssim\big(k+|\nabla A(y)-\langle \nabla A\rangle_{B_x}|\big).$$
This, in turn leads to that
\begin{eqnarray*}
\int_{\mathbb{R}^n\backslash B_x}G(x,\,\xi)|f(y)|{\rm d}y&\lesssim&\sum_{k=1}^{\infty}\int_{2^kB_x\backslash 2^{k-1}B_x}\Big|\frac{\Omega(x-y)}{|x-y|^{n+1}}-\frac{\Omega(\xi-y)}{|\xi-y|^{n+1}}\Big|\\
&&\qquad\times\big(k+|\nabla A(y)-\langle \nabla A\rangle_{B_x}|\big)|f(y)|{\rm d}y\\
&\lesssim& M_{L\log L}f(x).
\end{eqnarray*}
Therefore, for each $\xi\in Q$,
\begin{eqnarray}\label{equa:3.19}
\big|T_A(f\chi_{\mathbb{R}^n\backslash B_x})(x)-T_A(f\chi_{\mathbb{R}^n\backslash B_x})(\xi)\big|\lesssim M_{L\log L}f(x).
\end{eqnarray}
Combining the estimates (\ref{equa:3.17})-(\ref{equa:3.19}) leads to that
$${\rm ess}\sup_{\xi\in Q}|T_A(f\chi_{\mathbb{R}^n\backslash 3Q})(\xi)|\leq CM_{L\log L}f(x)+T^*_Af(x).$$

We turn our attention to $\mathcal{M}_{T_A^*}$. Again, it suffices to verify that
for bounded functions $f_1,\dots,f_N$ with compact supports, there exists a sparse family $\mathcal{S}$, such that for a.\,\,e. $x\in\mathbb{R}^n$,
\begin{eqnarray}\label{equa:3.21}\big\|\{T_A^*f_k(x)\}\big\|_{l^q}\lesssim \mathcal{A}_{\mathcal{S},\,L\log L}(\|\{f_k\}\|_{l^q})(x),\end{eqnarray}
which, by Theorem \ref{th2.1}, can be reduce to proving that
\begin{eqnarray}\label{equa:3.20}\mathcal{M}_{T_A^*}f(x)\le CM_{L\log L}f(x)+T^*_Af(x).\end{eqnarray}
Let $Q\subset \mathbb{R}^n$ be a cube and $x,\,\xi\in Q$. Write
\begin{eqnarray*}
|T_A^*(f\chi_{\mathbb{R}^n\backslash 3Q})(\xi)|&\leq &|T_A^*(f\chi_{\mathbb{R}^n\backslash B_x})(x)-T_A^*(f\chi_{\mathbb{R}^n\backslash B_x})(\xi)|\\
&&+|T_A^*(f\chi_{\mathbb{R}^n\backslash B_x})(x)|+|T_A^*(f\chi_{B_x\backslash 3Q})(\xi)|\\
&\lesssim&\sup_{\epsilon>0}\big|T_{A,\,\epsilon}(f\chi_{\mathbb{R}^n\backslash B_x})(x)-T_{A,\,\epsilon}(f\chi_{\mathbb{R}^n\backslash B_x})(\xi)\big|\\
&&+T_A^*f(x)+M_{L\log L}f(x).
\end{eqnarray*}
A straightforward computation leads to that for each $\epsilon>0$,
\begin{eqnarray*}
&&\big|T_{A,\,\epsilon}(f\chi_{\mathbb{R}^n\backslash B_x})(x)-T_{A,\,\epsilon}(f\chi_{\mathbb{R}^n\backslash B_x})(\xi)\big|\\
&&\lesssim\int_{|x-y|>\epsilon,\,|\xi-y|<\epsilon}\frac{|\Omega(x-y)|}{|x-y|^{n+1}}|A(x)-A(y)-\nabla A(y)(x-y)||f\chi_{\mathbb{R}^n\backslash B_x}(y)|{\rm d}y\\
&&+\int_{|x-y|\leq \epsilon,\,|\xi-y|>\epsilon}\frac{|\Omega(\xi-y)|}{|\xi-y|^{n+1}}|A(\xi)-A(y)-\nabla A(y)(\xi-y)||f\chi_{\mathbb{R}^n\backslash B_x}(y)|{\rm d}y\\
&&+\int_{\mathbb{R}^n\backslash B_x}\Big|\frac{\Omega(x-z)}{|x-z|^{n+1}}A(x)-A(y)-\nabla A(y)(x-y)\\
&&\quad-\frac{\Omega(x-z)}{|x-z|^{n+1}}A(x)-A(y)-\nabla A(y)(x-y)\Big||f(y)|{\rm d}y\\
&&\quad={\rm E}_1+{\rm E}_2+{\rm E}_3.
\end{eqnarray*}
As in the proof of (\ref{equa:3.19}), we know that
$${\rm E}_3\lesssim M_{L\log L}f(x).$$
On the other hand, as in (\ref{equa:3.18}), we deduce that
\begin{eqnarray*}{\rm E}_1&\lesssim&\int_{\epsilon<|x-y|\leq 2\epsilon}\frac{|\Omega(x-y)|}{|x-y|^{n+1}}|A(x)-A(y)-\nabla A(y)(x-y)||f(y)|{\rm d}y\\
&\lesssim &M_{L\log L}f(x),
\end{eqnarray*}
and
\begin{eqnarray*}{\rm E}_2&\lesssim&\int_{\epsilon<|\xi-y|\leq 2\epsilon}\frac{|\Omega(\xi-y)|}{|x-y|^{n+1}}|A(\xi)-A(y)-\nabla A(y)(\xi-y)||f(y)|{\rm d}y\\
&\lesssim &M_{L\log L}f(x).
\end{eqnarray*}
(\ref{equa:3.20}) now follows from the estimates for ${\rm E}_1$, ${\rm E}_2$ and ${\rm E}_3$. This completes the proof of Theorem \ref{t1.1}.
\qed
\begin{example}\label{e4.1}\rm  Let us consider the operator $$T_Af(x)=\int_{\mathbb{R}}\frac{A(x)-A(y)- A'(y)(x-y)}{(x-y)^2}f(y){\rm d}y.$$
For $A$ on $\mathbb{R}$ such that $A'\in {\rm BMO}(\mathbb{R})$, $T_A$ is bounded on $L^p(\mathbb{R},\,w)$ for $p\in (1,\,\infty)$ and $w\in A_p(\mathbb{R})$. Now let $\delta\in (0,\,1/2)$ and
$$f(x)=x^{-1+\delta}\chi_{(0,\,1)}(x),\,\,w(x)=|x|^{(p-1)(1-\delta)}.$$
It is well known that $[w]_{A_p}\approx \delta^{-p+1}$ (see \cite{bu, chpp}). Also,
$\|f\|_{L^p(\mathbb{R}^n,\,w)}^p=\delta^{-1}.$ Let $A(y)=y\log (|y|)$. We know that
$A'(y)=1+\log |y|\in {\rm BMO}(\mathbb{R})$.
A straightforward computation leads to that for $x\in (0,\,1)$,
\begin{eqnarray*}
T_Af(x)&=&\int_0^1\frac{x\log x-y\log y-(1+\log y)(x-y)}{(x-y)^2}y^{-1+\delta}{\rm d}y\\
&=&x\int^1_0\frac{\log x-\log y}{(x-y)^2}y^{-1+\delta}{\rm d}y-\int^1_0\frac{1}{x-y}y^{-1+\delta}{\rm d}y\\
&=&x^{-1+\delta}\int^{\frac{1}{x}}_0\frac{\log \frac{1}{t}}{(1-t)^2}t^{-1+\delta}dt-x^{-1+\delta}\int^{\frac{1}{x}}_0\frac{1}{1-t}t^{-1+\delta}{\rm d}t.
\end{eqnarray*}
Recall that for $t\in (0,\,1)\cup(1,\,\infty)$, $\log \frac{1}{t}\geq 1-t$. Therefore, for $x\in (0,\,1)$,
\begin{eqnarray*}
|T_Af(x)|&\geq &x^{-1+\delta}\int^{1}_0\frac{\log \frac{1}{t}-1+t}{(1-t)^2}t^{-1+\delta}{\rm d}t\\
&\geq&x^{-1+\delta}\int^{1}_0\big(\frac{\log \frac{1}{t}}{1-t}-1\big)t^{-1+\delta}{\rm d}t\\
&\geq&x^{-1+\delta}\int^{1}_0\big(\log \frac{1}{t}-1\big)t^{-1+\delta}{\rm d}t\\
&=&(\delta^{-2}-\delta^{-1})x^{-1+\delta}\geq \frac{1}{2}\delta^{-2}x^{-1+\delta}.
\end{eqnarray*}
Therefore,
$$\|T_Af\|_{L^p(\mathbb{R},\,w)}\geq \frac{1}{2}\delta^{-2}\|f\|_{L^p(\mathbb{R},\,w)}.$$
This shows that the conclusion in Theorem \ref{t1.1} is sharp when $p\in (1,\,2]$.
\end{example}

\section{Proof of Theorem \ref{t1.2}}
We begin with a lemma.
\begin{lemma}\label{le4.1}Let $\beta\in [0,\,\infty)$, $\mathcal{S}$ be a sparse family and $\mathcal{A}_{\mathcal{S},\,L(\log L)^{\beta}}$ be the associated sparse operator. Then for $p\in (1,\,\infty)$, $\epsilon\in (0,\,1]$ and weight $u$,
$$\|\mathcal{A}_{\mathcal{S},\,L(\log L)^{\beta}}f\|_{L^p(\mathbb{R}^n,\,u)}\lesssim p'^{1+\beta}p^2\big(\frac{1}{\epsilon}\big)^{\frac{1}{p'}}\|f\|_{L^p(\mathbb{R}^n,\,
M_{L(\log L)^{p-1+\epsilon}}u)}.$$
\end{lemma}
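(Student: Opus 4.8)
The plan is to prove this by duality, transferring the problem to a weighted estimate for the adjoint sparse operator acting against $M_{L(\log L)^{p-1+\epsilon}}u$. Since $\mathcal{A}_{\mathcal{S},\,L(\log L)^{\beta}}$ is (essentially) self-adjoint, fix $g \geq 0$ with $\|g\|_{L^{p'}(\mathbb{R}^n,\,u)}=1$ and estimate the bilinear form
\begin{eqnarray*}
\int_{\mathbb{R}^n} \mathcal{A}_{\mathcal{S},\,L(\log L)^{\beta}}f(x)\, g(x)\, u(x)\,{\rm d}x
= \sum_{Q\in\mathcal{S}} \|f\|_{L(\log L)^{\beta},\,Q}\,\int_Q g(x)u(x)\,{\rm d}x.
\end{eqnarray*}
The first step is to replace $\int_Q gu$ by $|Q|\langle gu\rangle_Q$ and pass to the sets $E_Q\subset Q$ from the sparseness definition, which controls the sum by $\sum_{Q}\|f\|_{L(\log L)^{\beta},\,Q}\,\langle gu\rangle_Q\,|E_Q|$ up to the constant $1/\eta=2\cdot 3^n$ (absorbed). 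The idea then is to bound $\langle gu\rangle_Q$ by $\inf_{E_Q} M(gu)$ and $\|f\|_{L(\log L)^{\beta},\,Q}$ by $\inf_{E_Q} M_{L(\log L)^{\beta}}f$, so that $\sum_Q (\cdots)|E_Q| \lesssim \int_{\mathbb{R}^n} M_{L(\log L)^{\beta}}f(x)\, M(gu)(x)\,{\rm d}x$ by disjointness.

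Next I would split $M(gu) = M(gu)\cdot u^{1/p}u^{-1/p}$ and apply Hölder with exponents $p$ and $p'$ against the measure ${\rm d}x$, but the cleaner route is to insert $u$ directly: write the integrand as $\big(M_{L(\log L)^{\beta}}f\big)u^{1/p}\cdot\big(M(gu)u^{-1/p}\big)$ and apply Hölder to get
\begin{eqnarray*}
\Big(\int_{\mathbb{R}^n}\big(M_{L(\log L)^{\beta}}f\big)^p u\,{\rm d}x\Big)^{1/p}
\Big(\int_{\mathbb{R}^n} M(gu)^{p'} u^{-p'/p}\,{\rm d}x\Big)^{1/p'}.
\end{eqnarray*}
For the first factor, since $\beta\le p-1 < p-1+\epsilon$, the maximal operator $M_{L(\log L)^{\beta}}$ is pointwise dominated by $M_{L(\log L)^{p-1+\epsilon}}$, but that is not quite what I want; instead I would use that $M_{L(\log L)^{\beta}}$ is bounded on $L^p(\mathbb{R}^n,\,v)$ for \emph{any} weight $v$ with constant $\lesssim p'^{\,1+\beta}$ when tested against $v = M_{\cdots}u$ — more precisely, the standard fact (Pérez) that $\int (M_{L(\log L)^{\beta}}f)^p v \lesssim p'^{\,p(1+\beta)}\int |f|^p M_{L(\log L)^{p-1}\,\text{-type}}v$; here one takes $v=u$ and notes $M_{L(\log L)^{p-1}}(u)\lesssim M_{L(\log L)^{p-1+\epsilon}}u$, which handles the $p'^{1+\beta}$ and leaves the target weight $M_{L(\log L)^{p-1+\epsilon}}u$ on the right. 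For the second factor, the key point is the Fefferman–Stein / Sawyer-type bound: $M$ maps $L^{p'}(\mathbb{R}^n,\, M_{L(\log L)^{p-1+\epsilon}}u \cdot (\text{something}))$ appropriately; concretely one uses that $\int M(gu)^{p'} w^{1-p'} \lesssim \big(\tfrac{1}{\epsilon}\big)\, p^{\,p'}\int g^{p'} u\,(M_{L(\log L)^{p-1+\epsilon}}u)^{1-p'}\cdot w^{\cdots}$ — the bump-type two-weight bound for $M$ with an $L(\log L)^{p-1+\epsilon}$ bump on the right, whose constant carries exactly the $(1/\epsilon)^{1/p'}$ and a factor $p$, yielding $\|g\|_{L^{p'}(\mathbb{R}^n,\,u)}=1$ after the dust settles.

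The main obstacle is bookkeeping the constants correctly: one must choose the two applications of the two-weight maximal bounds so that the \emph{same} weight $M_{L(\log L)^{p-1+\epsilon}}u$ appears (as $w$ in the first factor and as $w^{1-p'}$ in the second) and so that the numerical constants multiply to $p'^{\,1+\beta}p^2(1/\epsilon)^{1/p'}$ rather than something larger. The cleanest way to get the sharp $\epsilon$-dependence is to invoke the sharp form of Pérez's $L(\log L)^{p-1+\epsilon}$-bump theorem, where the $\big(\tfrac{1}{\epsilon}\big)^{1/p'}$ is built in; combined with the $p'^{\,1+\beta}$ from the Orlicz maximal operator and a harmless $p^2$ from the $L^p$-norm of $M$ itself, this matches the claimed bound. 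I expect the proof to be short once these two external estimates are quoted; all remaining steps (sparseness, Hölder, disjointness of the $E_Q$) are routine.
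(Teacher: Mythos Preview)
Your duality setup and the passage via sparseness to $\sum_{Q}\|f\|_{L(\log L)^{\beta},Q}\langle gu\rangle_Q|E_Q|$ are fine, but the argument breaks down at the next step. Once you pass to the pointwise bound $\int_{\mathbb{R}^n} M_{L(\log L)^{\beta}}f\cdot M(gu)\,{\rm d}x$ and split by H\"older with the factors $u^{1/p}$ and $u^{-1/p}$, the second factor becomes $\|M(gu)\|_{L^{p'}(\mathbb{R}^n,\,u^{1-p'})}$. Controlling this by $\|gu\|_{L^{p'}(\mathbb{R}^n,\,u^{1-p'})}=\|g\|_{L^{p'}(\mathbb{R}^n,\,u)}=1$ is exactly the boundedness of $M$ on $L^{p'}(\mathbb{R}^n,\,u^{1-p'})$, which requires $u^{1-p'}\in A_{p'}$, i.e.\ $u\in A_p$. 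For a general weight this fails outright: on $\mathbb{R}$ take $u(x)={\rm e}^{-|x|}$ and $g=\chi_{[-1,1]}$; then $\int M(gu)^{p'}u^{1-p'}=\infty$ while $\|g\|_{L^{p'}(u)}<\infty$. Splitting instead with $w=M_{L(\log L)^{p-1+\epsilon}}u$ does not rescue the plan either: the first factor then becomes $\|M_{L(\log L)^{\beta}}f\|_{L^p(w)}$, and since $w$ need not lie in any $A_p$ class you cannot bound this by $\|f\|_{L^p(w)}$; a further application of P\'erez's inequality would produce an iterated maximal weight strictly larger than $w$, and the stated constant would be lost as well. The ``bump-type two-weight bound for $M$'' you gesture at for the second factor is not a standard result in the form you need.

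The paper avoids this trap by never passing both averages to maximal functions. It works with the adjoint $\mathcal{A}^*_{\mathcal{S},L(\log L)^{\beta}}$, so that the Orlicz average sits on the test function $h$ rather than on $f$, and then uses the elementary inequality
\[
\|h\|_{L(\log L)^{\beta},\,Q}\lesssim \frac{1}{(s-1)^{\beta}}\Big(\frac{1}{|Q|}\int_Q|h|^s\Big)^{1/s},\qquad s>1,
\]
to replace the Orlicz sparse form by the $L^s$-average sparse form $\sum_{Q\in\mathcal{S}}\langle|f|\rangle_Q\langle|h|^s\rangle_Q^{1/s}|Q|$. This is precisely the object handled in Theorem~1.7 of \cite{lpr}, whose argument (Carleson embedding at the level of cubes, not a global H\"older split) yields the bound on $\mathcal{A}^*_{\mathcal{S},L(\log L)^{\beta}}$ from $L^{p'}(u^{1-p'})$ to $L^{p'}\big((M_{L(\log L)^{p-1+\epsilon}}u)^{1-p'}\big)$ with the exact constant $p'^{\,1+\beta}p^2(1/\epsilon)^{1/p'}$; the claimed estimate then follows by duality. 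The step you are missing is this reduction to $L^s$-averages, which keeps the analysis at the level of the cubes where the machinery of \cite{lpr} applies with the correct dependence on $p$, $\beta$ and $\epsilon$.
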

\begin{proof}Denote by $\mathcal{A}^*_{\mathcal{S},\,L(\log L)^{\beta}}$ the adjoint operator of $\mathcal{A}_{\mathcal{S},\,L(\log L)^{\beta}}$. Then for suitable functions $f$ and $g$, and any $s\in (1,\,\infty)$,
\begin{eqnarray*}\Big|\int_{\mathbb{R}^n}\mathcal{A}^*_{\mathcal{S},\,L(\log L)^{\beta}}f(x)h(x){\rm d}x\Big|&\le & \sum_{Q\in\mathcal{S}}|Q|\langle|f|\rangle_Q
\|h\|_{L(\log L)^{\beta},\,Q}\\
&\lesssim &\frac{1}{(s-1)^{\beta}}|Q|\langle|f|\rangle_Q\Big(\frac{1}{|Q|}\int_{Q}|g(y)|^sdy\Big)^{\frac{1}{s}}.
\end{eqnarray*}
Repeating the argument used in the proof of Theorem 1.7 in \cite{lpr}, we deduce that for $p\in (1,\,\infty)$, $\epsilon\in (0,\,1)$ and weight $u$,
$$\|\mathcal{A}^*_{\mathcal{S},\,L(\log L)^{\beta}}f\|_{L^{p'}(\mathbb{R}^n,\,(M_{L(\log L)^{p-1+\epsilon}}u)^{1-p'})}\lesssim_n p'^{1+\beta}p^2\big(\frac{1}{\epsilon}\big)^{\frac{1}{p'}}\|f\|_{
L^{p'}(\mathbb{R}^n,\,u^{1-p'})}.$$
This, via a duality argument, shows that
$$\|\mathcal{A}_{\mathcal{S},\,L(\log L)^{\beta}}f\|_{L^p(\mathbb{R}^n,\,u)}\lesssim_{n}p'^{1+\beta}p^2\big(\frac{1}{\epsilon}\big)^{\frac{1}{p'}} \|f\|_{L^p(\mathbb{R}^n,\,M_{L(\log L)^{p-1+\epsilon}}u)}.$$
This completes the proof of Lemma \ref{le4.1}.
\end{proof}
The following Theorem is an improvement of Lemma 4.1 in \cite{ler4}, and the proof here is of independent interest.
\begin{theorem} \label{th4.2}
Let $\mathcal{S}$ be a sparse family and $\beta\in [0,\,\infty)$, $\mathcal{A}_{\mathcal{S},L(\log L)^{\beta}}$ be the associated sparse operator. Let  $\epsilon\in (0,\,1]$ and $u$ be a weight. Then for each
$\lambda>0$,
\begin{eqnarray*}&&u(\{x\in\mathbb{R}^n:\, \mathcal{A}_{\mathcal{S},\,L(\log L)^{\beta}}f(x)>\lambda\})\\
&&\quad\lesssim \frac{1}{\epsilon^{1+\beta}}
\int_{\mathbb{R}^n}\frac{|f(x)|}{\lambda}\log ^{\beta}\Big({\rm e}+\frac{|f(x)|}{\lambda}\Big)M_{L(\log L)^{\epsilon}}u(x){\rm d}x.\end{eqnarray*}
\end{theorem}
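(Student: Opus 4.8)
The plan is to prove the weak-type endpoint bound for the sparse operator $\mathcal{A}_{\mathcal{S},\,L(\log L)^\beta}$ by a Calder\'on--Zygmund-type decomposition of $f$ adapted to the level $\lambda$, carried out with respect to the dyadic structure underlying $\mathcal{S}$ (or, via the one-third trick, finitely many standard dyadic grids). First I would normalize $\lambda=1$ by homogeneity of both sides, and assume $f\ge 0$ with $f\in L^1$ of compact support so that all sums below are finite. Form the Calder\'on--Zygmund stopping cubes $\{Q_j\}$ for $f$ at height $1$, giving $f=g+b$ with $g$ the good part bounded pointwise by a constant and $b=\sum_j(f-\langle f\rangle_{Q_j})\chi_{Q_j}$, together with the standard facts $\sum_j|Q_j|\lesssim\int f$ and $\langle f\rangle_{Q_j}\le 2^n$.

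Next I would split the level set $\{\mathcal{A}_{\mathcal S,\,L(\log L)^\beta}f>2\}\subset\{\mathcal{A}_{\mathcal S}g>1\}\cup\{\mathcal{A}_{\mathcal S}b>1\}$. On the good part one uses the $L^2$ bound for $\mathcal{A}_{\mathcal S,\,L(\log L)^\beta}$ (Lemma \ref{le4.1} with $p=2$, $\epsilon$ fixed), Chebyshev, and the pointwise bound on $g$ to get
\[
u(\{\mathcal{A}_{\mathcal S}g>1\})\lesssim \int_{\mathbb R^n}|g(x)|^2\,M_{L(\log L)^{1+\epsilon}}u(x)\,{\rm d}x\lesssim \int_{\mathbb R^n}|f(x)|\,M_{L(\log L)^{\epsilon}}u(x)\,{\rm d}x,
\]
after absorbing the constant from $|g|\lesssim1$ and controlling $M_{L(\log L)^{1+\epsilon}}u$ by $M_{L(\log L)^{\epsilon}}(M u)$ or directly, being careful to track the $\epsilon^{-(1+\beta)}$ dependence. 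For the bad part, since each $Q_j$ is a union of cubes in $\mathcal{S}\cap\mathcal{D}(Q_j)$, the key observation is that for a cube $Q\in\mathcal S$ contained in some $Q_j$ the Luxemburg average $\|b\|_{L(\log L)^\beta,\,Q}$ only sees $f-\langle f\rangle_{Q_j}$, while for $Q$ not contained in any $Q_j$ (the cubes where $\langle f\rangle_Q\le 1$, roughly) the average is dominated using that $f\le 1$ off $\cup_jQ_j$; one then exploits the $L(\log L)^\beta$ normalization to estimate $\|f\|_{L(\log L)^\beta,\,Q}\lesssim \langle f\log^\beta({\rm e}+f)\rangle_Q+1$ on the relevant scales, multiply by $|Q|$, and sum using sparseness of $\mathcal S$ plus $\sum|Q_j|\lesssim\int f$. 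Throwing away the exceptional set $\cup_j 3Q_j$ (whose $u$-measure is $\lesssim\int f\,Mu\lesssim\int f\,M_{L(\log L)^\epsilon}u$ by the $A_\infty$-type property of $M_{L(\log L)^\epsilon}u$ against Lebesgue averages) reduces everything to sparse cubes at scales $\ge$ that of the $Q_j$'s, where the $L(\log L)^\beta$ average is harmless.

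I expect the main obstacle to be the sharp tracking of the $\epsilon^{-(1+\beta)}$ constant and the correct interplay between the $L(\log L)^\beta$ norm on a cube $Q$ and the Orlicz bump $M_{L(\log L)^\epsilon}u$ on the other side: one must insert $u$ on $E_Q$ via $|Q|\langle u\rangle_Q\lesssim u(E_Q)$-type reasoning while the integrand carries the $\log^\beta$ growth, so a generalized H\"older inequality between $L(\log L)^\beta$ and $\exp L^{1/\beta}$ (or the duality $\Phi$--$\bar\Phi$ with $\Phi(t)=t\log^\beta({\rm e}+t)$) has to be combined with the pointwise lower bound $M_{L(\log L)^\epsilon}u(x)\gtrsim \|u\|_{L(\log L)^\epsilon,\,Q}\ge \langle u\rangle_Q$ for $x\in Q$. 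An alternative, cleaner route is to prove the inequality first for $\beta=0$ by the decomposition above and then bootstrap to general $\beta$ by writing $\mathcal{A}_{\mathcal S,\,L(\log L)^\beta}$ as roughly an $\beta$-fold iteration (using $M_{L(\log L)^\beta}\approx M^{\beta+1}$ up to constants) together with Lemma \ref{le4.1}; I would present whichever of these keeps the constant explicitly of the stated form $\epsilon^{-(1+\beta)}$, and fill in the routine Orlicz estimates by citing the properties of $\|\cdot\|_{L(\log L)^\beta,\,Q}$ and $M_{L(\log L)^\beta}$ recalled earlier in the paper.
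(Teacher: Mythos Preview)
Your outline has the right overall shape (normalize, decompose, apply Lemma~\ref{le4.1} to a bounded piece, excise an exceptional set), but two choices you make would not produce the stated conclusion, and the key device in the paper's proof is missing from your plan.

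First, applying Lemma~\ref{le4.1} with $p=2$ is the wrong move. With $p=2$ the bound reads
\[
\|\mathcal{A}_{\mathcal S,L(\log L)^\beta}g\|_{L^2(u)}\lesssim 2^{1+\beta}\,\epsilon^{-1/2}\,\|g\|_{L^2(M_{L(\log L)^{1+\epsilon}}u)},
\]
so on the right you get the weight $M_{L(\log L)^{1+\epsilon}}u$, not $M_{L(\log L)^{\epsilon}}u$; this cannot be repaired by writing $M_{L(\log L)^{1+\epsilon}}u\lesssim M_{L(\log L)^{\epsilon}}(Mu)$, since the target inequality has only $M_{L(\log L)^{\epsilon}}u$. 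The paper instead takes $q=1+\epsilon/2$, so that the Orlicz bump on the weight becomes $q-1+\epsilon/2=\epsilon$ exactly, and the constant $q'^{\,q(1+\beta)}\approx\epsilon^{-(1+\beta)}$ is precisely what is claimed. Both the correct weight and the correct $\epsilon$-dependence come from choosing $p$ near $1$, not $p=2$.

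Second, your Calder\'on--Zygmund stopping is done on the ordinary averages $\langle f\rangle_Q$, which only guarantees $\langle f\rangle_{Q_j}\le 2^n$; it does \emph{not} bound $\|f\|_{L(\log L)^\beta,Q_j}$, and this is what you need to handle the bad part. The paper stops instead on the Orlicz maximal function, taking $\{Q_j\}$ maximal with $\|f\|_{L(\log L)^\beta,Q_j}>1$, so that $\|f\|_{L(\log L)^\beta,Q_j}\lesssim 1$. This is essential for the next step: the paper does not try to estimate $\mathcal{A}_{\mathcal S,L(\log L)^\beta}b$ directly, but introduces the replacement function
\[
f_3=\sum_j \|f\|_{L(\log L)^\beta,Q_j}\,\chi_{Q_j},
\]
shows the pointwise comparison $\mathcal{A}_{\mathcal S,L(\log L)^\beta}(f\chi_{\cup_jQ_j})(x)\lesssim \mathcal{A}_{\mathcal S,L(\log L)^\beta}f_3(x)$ for $x\notin E=\cup_j 4nQ_j$ (using that any $I\in\mathscr D$ through $x$ must swallow each $Q_j$ it meets, together with the submultiplicativity of $t\log^\beta({\rm e}+t)$), and then applies Lemma~\ref{le4.1} with $q=1+\epsilon/2$ to the \emph{bounded} function $f_3$. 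Your description of the bad part (splitting sparse cubes inside/outside $Q_j$, invoking sparseness directly) does not supply a substitute for this comparison, and in particular your mean-zero pieces $b_{j}=f\chi_{Q_j}-\langle f\rangle_{Q_j}\chi_{Q_j}$ play no role for $\mathcal{A}_{\mathcal S,L(\log L)^\beta}$, which involves Luxemburg norms rather than linear averages.
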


\begin{proof}By the well known one-third trick, we may assume that $\mathcal{S}\subset \mathscr{D}$ for some dyadic grid $\mathscr{D}$.
Now let $M_{\mathscr{D},\,L(\log L)^{\beta}}$ be the maximal operator defined by
$$M_{\mathscr{D},\,L(\log L)^{\beta}}f(x)=\sup_{Q\ni x\atop{Q\in\mathscr{D}}}\|f\|_{L(\log L)^{\beta},\,Q}.$$
Decompose the set $\{x\in\mathbb{R}^n:\,M_{\mathscr{D},\,L(\log L)^{\beta}}f(x)>1\}$ as
$$\{x\in\mathbb{R}^n:\,M_{\mathscr{D},\,L(\log L)^{\beta}}f(x)>1\}=\cup_{j}Q_j,$$
with $Q_j$ the maximal cubes in $\mathscr{D}$ such that $\|f\|_{L(\log L)^{\beta},\,Q_j}>1$. Therefore,
$$1 <\frac{1}{|Q_j|}\int_{Q_j}|f(y)|\log^{\beta} \big({\rm e}+|f(y)|\big){\rm d}y\lesssim 2^n.$$Let
$$f_1(y) = f(y)\chi_{\mathbb{R}^n\backslash \cup_jQ_j}(y); f_2(y) =
\sum_j
f(y)\chi_{Q_j}(y);$$
and
$$f_3(y) =\sum_j\|f\|_{L(\log L)^{\beta},\,Q_j}\chi_{Q_j}(y).$$
It is obvious that $\|f_1\|_{L^{\infty}(\mathbb{R}^n)}\lesssim 1$. Thus, by Lemma \ref{le4.1},
\begin{eqnarray}\label{equa:4.1}
&&u(\{x\in\mathbb{R}^n:\,\mathcal{A}_{\mathcal{S},\,L(\log L)^{\beta}}f_1(x)>1/2\})\lesssim \|\mathcal{A}_{\mathcal{S},\,L(\log L)^{\beta}}f_1\|_{L^q(\mathbb{R}^n,u)}^q\\
&&\quad\lesssim q'^{q(1+\beta)}\big(\frac{1}{\epsilon}\big)^{\frac{q}{q'}}\int_{\mathbb{R}^n}|f_1(y)|^q
M_{L(\log L)^{q-1+\epsilon/2}}u(y){\rm d}y\nonumber\\
&&\quad\lesssim\frac{1}{\epsilon^{1+\beta}}\int_{\mathbb{R}^n}|f_1(y)|
M_{L(\log L)^{\epsilon}}u(y){\rm d}y,\nonumber
\end{eqnarray}
if we choose $q=1+\epsilon/2$.

Now let $E=\cup_j4nQ_j$, and $u_E(y)=u(y)\chi_{\mathbb{R}^n\backslash E}(y).$ We can verify that
\begin{eqnarray}\label{equa:4.2}u(E)\lesssim \sum_{j}\inf_{z\in Q_j}Mu(z)|Q_j|\lesssim \int_{\mathbb{R}^n}|f(y)|\log^{\beta}
\big({\rm e}+|f(y)|\big)Mu(y){\rm d}y
\end{eqnarray}
and for each $j$ and $\gamma\in [0,\,\infty)$
$$\sup_{y\in Q_j}M_{L(\log L)^{\gamma}}u_E(y)\approx\sup_{z\in Q_j}M_{L(\log L)^{\gamma}}u_E(z).$$
Note that $\|f_3\|_{L^{\infty}(\mathbb{R}^n)}\lesssim 1$ and
\begin{eqnarray*}
\|f_3\|_{L^1(\mathbb{R}^n,\,M_{L(\log L)^{\gamma}}u_E)}
&\lesssim &\sum_{j}\inf_{z\in Q_j}M_{L(\log L)^{\gamma}}u_E(z)|Q_j|\|f\|_{L(\log L)^{\beta},\,Q_j}\\
&\lesssim &\int_{\mathbb{R}^n}|f(y)|\log^{\beta} ({\rm e}+|f(y)|)M_{L(\log L)^{\gamma}}u_E(y){\rm d}y.
\end{eqnarray*}
If we can prove that for $x\in\mathbb{R}^n\backslash E$,
\begin{eqnarray}\label{equa:4.3}\mathcal{A}_{\mathcal{S},\,L(\log L)^{\beta}}f_2(x)\lesssim
\mathcal{A}_{\mathcal{S},\,L(\log L)^{\beta}}f_3(x), \end{eqnarray}
then by Lemma \ref{le4.1} and the inequality (\ref{equa:4.3}),
\begin{eqnarray}\label{equa:4.4}
&&u(\{x\in\mathbb{R}^n\backslash E:\,\mathcal{A}_{\mathcal{S},\,L(\log L)^{\beta}}f_2(x)>1\})\\
&&\quad\lesssim \|\mathcal{A}_{\mathcal{S},\,L(\log L)^{\beta}}f_3\|_{L^{q}(\mathbb{R}^n,u_E)}^{q}\nonumber\\
&&\quad\lesssim q'^{q(1+\beta)}\big(\frac{1}{\epsilon}\big)^{\frac{q}{q'}}\|f_3\|_{L^q(\mathbb{R}^n,\,M_{L(\log L)^{q-1+\epsilon/2}}u_E)}^q\nonumber\\
&&\quad\lesssim q'^{q(1+\beta)}\big(\frac{1}{\epsilon}\big)^{\frac{q}{q'}}\int_{\mathbb{R}^n}|f(y)|
\log^{\beta} ({\rm e}+|f(y)|)M_{L(\log L)^{q-1+\epsilon/2}}u_E(y){\rm d}y\nonumber\\
&&\quad\lesssim \frac{1}{\epsilon^{1+\beta}}\int_{\mathbb{R}^n}|f(y)|
\log^{\beta} ({\rm e}+|f(y)|)M_{L(\log L)^{\epsilon}}u(y){\rm d}y,\nonumber
\end{eqnarray}again we choose $q=1+\epsilon$. Our desired estimate for now follows from (\ref{equa:4.1}), (\ref{equa:4.2}) and (\ref{equa:4.4}) directly.

We now prove (\ref{equa:4.3}). For each fixed $x\in\mathbb{R}^n\backslash E$ and each cube $I\in \mathscr{D}$
containing $x$, note that $I\cap Q_j \not=\emptyset$ implies that $Q_j \subset I$. Thus, for each $\lambda > 0$, a
straightforward computation tells us that
\begin{eqnarray*}
&&\int_{I}\frac{|f_2(y)|}{\lambda}\log ^{\beta}\Big({\rm e}+\frac{|f_2(y)|}{\lambda}\Big){\rm d}y\\
&&\quad=\sum_{j:\, Q_j\subset I}\int_{Q_j}\frac{|f_2(y)|}{\lambda}\log ^{\beta}\Big({\rm e}+\frac{|f_2(y)|}{\lambda}\Big){\rm d}y\\
&&\lesssim \sum_{j:\, Q_j\subset I}\frac{\|f\|_{L(\log L)^{\beta},\,Q_j}}{\lambda}\log ^{\beta}\Big({\rm e}+\frac{\|f\|_{L(\log L)^{\beta},\,Q_j}}{\lambda}
\Big)\\
&&\qquad\times\int_{Q_j}\frac{|f(y)|}{\|f\|_{L(\log L)^{\beta},\,Q_j}}\log ^{\beta}\Big({\rm e}+\frac{|f(y)|}
{\|f\|_{L(\log L)^{\beta},\,Q_j}}\Big){\rm d}y\\
&&\quad\lesssim\sum_{j:\, Q_j\subset I}|Q_j|\frac{\|f\|_{L(\log L)^{\beta},\,Q_j}}{\lambda}\log ^{\beta}\Big({\rm e}+\frac{\|f\|_{L(\log L)^{\beta},\,Q_j}}{\lambda}
\Big),
\end{eqnarray*}
since for $t_1,\,t_2\in [0,\,\infty)$,
$$\log({\rm e}+t_1t_2)\lesssim \log ({\rm e}+ t_1)\log({\rm e}+ t_2).$$
On the other hand,
\begin{eqnarray*}&&\int_{I}\frac{|f_3(y)|}{\lambda}\log ^{\beta}\Big({\rm e}+\frac{|f_3(y)|}{\lambda}\Big){\rm d}y\\
&&\quad=\sum_{j:\, Q_j\subset I}\int_{Q_j}\frac{|f_3(y)|}{\lambda}\log ^{\beta}\Big({\rm e}+\frac{|f_3(y)|}{\lambda}\Big){\rm d}y\\
&&\quad=\sum_{j:\, Q_j\subset I}|Q_j|\frac{\|f\|_{L(\log L)^{\beta},\,Q_j}}{\lambda}\log ^{\beta}\Big({\rm e}+\frac{\|f\|_{L(\log L)^{\beta},\,Q_j}}{\lambda}
\Big).
\end{eqnarray*}
Therefore, for each $x\in\mathbb{R}^n\backslash E$ and $I\in \mathscr{D}$ containing $x$,
$$\|f_2\|_{L(\log L)^{\beta},\,I}\lesssim \|f_3\|_{L(\log L)^{\beta},\,I}.$$
The inequality (\ref{equa:4.3}) follows directly. This completes the proof of Theorem \ref{th4.2}.
\end{proof}

{\it Proof of Theorem \ref{t1.2}}. We only consider $T_A$. The argument for $T_A^*$ is similar. Applying the ideas used in \cite[p.608]{hp2} (see also the proof of Corollary 1.3 in \cite{ler4}, we deduce from Theorem \ref{th4.2} that for $w\in A_1(\mathbb{R}^n)$ and $\lambda>0$,
\begin{eqnarray*}&&w(\{x\in\mathbb{R}^n:\, \mathcal{A}_{\mathcal{S},\,L(\log L)^{\beta}}f(x)>\lambda\})\\
&&\quad\lesssim [w]_{A_1}\log^{\beta}({\rm e}+[w]_{A_{\infty}})
\int_{\mathbb{R}^n}\frac{|f(x)|}{\lambda}\log ^{\beta}\Big({\rm e}+\frac{|f(x)|}{\lambda}\Big)w(x){\rm d}x.\end{eqnarray*}
This, along with the inequality (\ref{equa:3.15'}) leads to our desired conclusion for $T_A$.
\qed
\begin{remark}Let $\epsilon\in (0,\,1]$ and $u$ a weight. By Lemma \ref{le4.1}, Theorem \ref{th4.2}, the estimates (\ref{equa:3.15'}) and (\ref{equa:3.21}), we know that for $p\in (1,\,\infty)$,
\begin{eqnarray*}
&&\|\{T_Af_k\}\|_{L^p(l^q;\,\mathbb{R}^n,\,u)}+\|\{T_A^*f_k\}\|_{L^p(l^q;\,\mathbb{R}^n,\,u)}\\
&&\quad\lesssim_{n}p'^{1+\beta}p^2\big(\frac{1}{\epsilon}\big)^{\frac{1}{p'}} \|f\|_{L^p(l^q;\,\mathbb{R}^n,\,M_{L(\log L)^{p-1+\epsilon}}u)}.\end{eqnarray*}
Moreover, for each fixed $\lambda>0$,
\begin{eqnarray*}
&&w\big(\big\{x\in\mathbb{R}^n:\,\|\{T_Af_k(x)\}\|_{l^q}+\|\{T_A^*f_k(x)\}\|_{l^q}>\lambda\big\}\big)\\
&&\quad\lesssim_{n}\frac{1}{\epsilon^{2}}
\int_{\mathbb{R}^n}\frac{\|\{f_k(x)\}\|_{l^q}}{\lambda}\log \Big({\rm e}+\frac{\|\{f_k(x)\}\|_{l^q}}{\lambda}\Big)M_{L(\log L)^{\epsilon}}u(x){\rm d}x.\end{eqnarray*}
These estimates extend and improve the main results in \cite{hy} and \cite{hl}.
\end{remark}

\end{document}